\title[Radial Function Approximation]
{Rate of Convergence and Tractability of \\
the Radial Function Approximation Problem}
\author{Gregory E. Fasshauer}
\address{Department of Applied Mathematics, Illinois Institute of Technology, Room E1-208, 10 W. 32$^{\text{nd}}$ Street, Chicago, IL 60616}
\email{fasshauer@iit.edu}
\author{Fred J. Hickernell}
\address{Department of Applied Mathematics, Illinois Institute of Technology, Room E1-208, 10 W. 32$^{\text{nd}}$ Street, Chicago, IL 60616}
\email{hickernell@iit.edu}
\author{Henryk Wo\'zniakowski}
\address{Department of Computer Science, Columbia University,
New York, NY 10027, and Institute of Applied Mathematics, University of Warsaw,
ul. Banacha 2,  02-097 Warsaw, Poland}
\email{henryk@cs.columbia.edu}
\thanks{This research was supported in part
for the first two authors by NSF-DMS grant \#0713848, 
for the second author by Department of Energy grant \#SC0002100, and
for the third author by NSF-DMS grant \#0914345.}
\subjclass[2010]{Primary 65D15; Secondary 68Q17, 41A25, 41A63}
\date{\today}
\keywords{Gaussian kernel,
reproducing kernel Hilbert spaces, shape parameter, tractability}
\date{May, 2010}
\newcommand{\Oh}{{\mathcal O}}
\newcommand{\cl}{{\mathcal L}}
\newcommand{\bi}{{\boldsymbol{i}}}
\newcommand{\bx}{{\boldsymbol{x}}}
\newcommand{\bgamma}{{\boldsymbol{\gamma}}}
\theoremstyle{plain}
\newtheorem{theorem}{Theorem}
\newtheorem{corollary}{Corollary}
\newtheorem{lemma}{Lemma}
\theoremstyle{definition}
\def\il{\left<}
\def\ir{\right>}
\def\e{\varepsilon }
\def\phi{\varphi }
\def\epsilon{\varepsilon }
\def\rho{\varrho }
\def\lstd{{\Lambda^{\rm std}}}
\def\lall{{\Lambda^{\rm all}}}
\def\refx{\global\advance\refnum by 1 {\the\refnum . \ }}
\def\({\biggl( }
\def\){\biggr) }
\def\c{{\bf c}}
\newcommand{\tlambda}{\tilde{\lambda}}
\newcommand{\bc}{\boldsymbol{c}}
\newcommand{\bk}{\boldsymbol{k}}
\newcommand{\mK}{\mathsf{K}}
\newcommand{\bj}{\boldsymbol{j}}
\newcommand{\bt}{\boldsymbol{t}}
\newcommand{\by}{\boldsymbol{y}}
\newcommand{\bzero}{\boldsymbol{0}}
\newcommand{\tK}{\widetilde{K}}
\newcommand{\naturals}{\mathbb{N}}
\newcommand{\reals}{\mathbb{R}}
\def\e{\varepsilon}
\newcommand{\stdall}{\vartheta}
\newcommand{\absnorm}{\psi}
\newcommand{\tC}{\tilde{C}}
\begin{document}

\begin{abstract}
This article studies the problem of approximating functions
belonging to a Hilbert space $H_d$ with an isotropic or anisotropic
Gaussian reproducing kernel,
$$
K_d(\bx,\bt) = \exp\left(-\sum_{\ell=1}^d\gamma_\ell^2(x_\ell-t_\ell)^2\right)
\ \ \ \mbox{for all}\ \   \bx,\bt\in\reals^d.
$$
The isotropic case corresponds to using
the same shape parameters for all coordinates, namely
$\gamma_\ell=\gamma>0$ for all $\ell$,
whereas the anisotropic case corresponds to varying shape parameters
$\gamma_\ell$. We are especially interested in moderate to large $d$.
We consider two classes of algorithms:

(1) using finitely many arbitrary linear functionals,

(2) using only finitely many function values.

\noindent
The pertinent error criterion is the worst case of such an algorithm over
the unit ball in $H_d$, with the error for a single
function given by the $\cl_2$ norm also with a Gaussian weight.

Since the Gaussian kernel is analytic, the minimal
worst case errors of algorithms that use at most $n$
linear functionals or $n$ function values
vanish like $\Oh(n^{-p})$ as $n$ goes to infinity. Here,
$p$ can be arbitrarily large,
but the leading coefficient may depend on $d$ (Theorem \ref{thm1}).
On the other hand, if $d$ dependence is taken into account,
the convergence rate may be quite slow.  If the goal is to make
the error smaller than $Cn^{-p}$ for some $C$ \emph{independent of
  $d$} or \emph{polynomially dependent on $d$},
then this is possible for any choice of shape parameters
with the largest $p$ equal to $1/2$,
provided that arbitrary linear functional data is
available (Theorem \ref{thm2}).  If the sequence of shape
parameters $\gamma_\ell$
decays to zero like $\ell^{-\omega}$ as $\ell$
(and therefore also $d$) tends to $\infty$, then
the largest $p$ is roughly $\max(1/2,\omega)$ (Theorem \ref{thm3}).
If only function values are available, dimension-independent
convergence rates are somewhat worse (Theorems \ref{thm4} and
\ref{thm5}).

If the goal is to make the error smaller than $Cn^{-p}$ \emph{times
the initial ($n=0$) error}, then the corresponding $p$ is roughly
$\omega$. Therefore it is the same as before iff $\omega\ge1/2$ (Theorem
\ref{thm7} and Corollary \ref{cor2}).
In particular, for the isotropic case, when $\omega=0$,
the error does not even
decay polynomially with $n^{-1}$ (Theorem \ref{thm6}).
In summary, excellent dimension independent error decay
rates are only possible when the sequence of shape parameters decays
rapidly.
\end{abstract}

\maketitle

\section{Introduction}

Algorithms for function approximation based on symmetric,
positive definite kernels are important and
fundamental tools for
numerical computation \cite{Buh03,Fas07,SW06,Wen05},
statistical learning
\cite{BTA04,CZ07,HTF09,RW06,SS02,Ste99,SC08,Wah90},
and are often used in engineering applications \cite{FSK08}.
These algorithms go by a variety of names, including radial
basis function methods \cite{Buh03},
scattered data approximation \cite{Wen05},
meshfree methods \cite{Fas07}, (smoothing) splines \cite{Wah90},
kriging \cite{Ste99},
Gaussian process models \cite{RW06} and support vector machines \cite{SC08}.

In a typical application we are given noisy or noiseless scalar
or vector data. For simplicity, this article treats only
the noiseless scalar case in which the data is of the
form $y_i=f(\bx_i)$ or $y_i=L_i(f)$ for $i=1,\ldots, n$.
That is, a function $f$ is sampled at the locations $\{\bx_1, \ldots, \bx_n\}$,
usually referred to as the \emph{data sites} or the \emph{design},
or more generally we know the values of $n$ linear functionals $L_i$
on $f$. Here we assume that the domain of $f$ is a subset of
$\reals^d$.
One then chooses a symmetric, positive definite kernel
$K_d$ (see \eqref{Kassump} below for the specific requirements),
ideally such that $f\in H(K_d)$, where $H(K_d)$ is a reproducing kernel
Hilbert space with the reproducing kernel $K_d$.
Then it is a good idea to construct an approximation $S_n(f)$ to $f$
which has the minimal norm among all elements in $H(K_d)$ that
interpolate the data. This corresponds to the \emph{spline}
algorithm and requires the solution of an $n\times n$ system of linear equations.
While the spline algorithm is optimal in the sense explained in
Section \ref{funapproxsec} below, there still remains
the important questions of how fast $S_n(f)$ converges to $f$
as the number of data $n$ tends to infinity, and how to choose
the data sites or linear functionals  to
maximize the rate of convergence to $f$. Another question is to study
how the error bounds depend on $d$. The latter question is especially
important when $d$ is large.

The typical convergence rates
(see, e.g., \cite{Fas07,Wen05}) are of the form
$\mathcal{O}(n^{-p/d})$,
where $p$ denotes the smoothness of the kernel $K_d$, and the design is
chosen optimally.
Unfortunately, for a finite $p$,
this means that as the dimension increases,
these known convergence rates deteriorate dramatically.
Furthermore, the dimension dependence of the leading constant
in the big $\mathcal{O}$-term is usually not known in these estimates.

This article studies Hilbert spaces with reproducing kernels
$K_d:\reals^d\times \reals^d\to\reals$.
The kernel is called \emph{translation invariant}
or \emph{stationary} if $K(\bx,\bt) = \tK(\bx-\bt)$.
In particular, the kernel is \emph{radially symmetric}
or \emph{isotropic} if $K(\bx,\bt) = \kappa(\|\bx-\bt\|^2)$,
in which case the kernel is called a \emph{radial (basic) function}.

A kernel commonly used in practice,
and one which is studied here, is the isotropic Gaussian kernel:
\begin{subequations}\label{Gausskernelboth}
\begin{equation} \label{Gausskernel}
K_d(\bx,\bt) = e^{-\gamma^2 \|\bx-\bt\|^2} \ \ \ \mbox{for all}
\ \ \ \bx,\bt\in\reals^d ,
\end{equation}
where a positive $\gamma$ is called the \emph{shape parameter}.
This parameter functions as an inverse length scale.
Choosing $\gamma$ very small has a beneficial effect
on the rate of decay of the eigenvalues of the Gaussian kernel,
as is shown below. An anisotropic,
but stationary generalization of the Gaussian kernel is obtained
by introducing a different positive shape parameter $\gamma_\ell$
for each variable,
\begin{equation} \label{anisoGauss}
K_d(\bx,\bt) = e^{-\gamma_1^2 (x_1-t_1)^2 -\, \cdots \,
- \gamma_d^2 (x_d-t_d)^2}\ \ \ \mbox{for all}
\ \ \ \bx,\bt\in\reals^d.
\end{equation}
\end{subequations}
As evidence of its popularity, we note that this latter kernel is used in
the Gaussian process modeling module of the JMP commercial
statistical software \cite{JMP08}.  In JMP, the values of the~$\gamma_\ell$
are determined in a data-driven way\footnote{In the tractability
literature, the shape parameters $\gamma_\ell$ are called
\emph{product weights}.}.

We stress that the Gaussian kernels are analytic, and the smoothness
parameter $p=\infty$. Therefore one can
hope to obtain convergence rates of the form $\mathcal{O}(n^{-\tau})$
for an arbitrarily large $\tau$. As we shall see, this is indeed the
case. This is shown in Theorem~\ref{thm1} and explained in Section~4.
However, the dependence on $d$ is a function of $\tau$
and only for a relatively small $\tau$ is the dependence on $d$
acceptable.

Given the growing number of applications with moderate to
large dimension, $d$, it is desirable to
have dimension-independent polynomial convergence rates of the form
$Cn^{-p}$ for positive $C$ and $p$,
which corresponds to \emph{strong polynomial tractability},
or at worst, convergence rates that are polynomially
dependent on dimension $d$ and are of the form
$Cd^{\,q}\,n^{-p}$ for positive $C,q$ and $p$,
which corresponds to \emph{polynomial tractability}.

This paper establishes convergence rates with polynomial
or no dimension dependence for the Gaussian kernel introduced
in \eqref{Gausskernelboth}.  The rates are summarized in
Table~\ref{summarytable}.  As explained in Section \ref{funapproxsec}, the absolute error is the $\cl_2$ worst case approximation error based on a Gaussian weight with mean zero and variance $1/2$. The normalized error is the absolute error divided by $\|I_d\|$, were $I_d$ denotes the
embedding between the radial function space $H_d$ and the
$\cl_2$ space. Note that the norm $\|I_d\|$ is the initial error that can
be achieved by the zero algorithm without sampling the functions. The dimension independent convergence rates depend to some extent on which error criterion is used.  They also depend on whether the data available consists
only of function values or, more generally, of arbitrary
linear functionals.  This latter, more generous setting may allow for faster convergence.

\begin{table}
\caption{Error decay rates as a function of sample size $n$ \label{summarytable}}
\centering
\renewcommand{\arraystretch}{1.5}
\begin{tabular}{p{1.3in}||p{2in}|p{2in}}
& \multicolumn{2}{c}{Error Criterion}\\
\raggedleft Data Available & \centering Absolute & \centering Normalized \tabularnewline
\hline
\raggedleft Arbitrary \\ Linear functionals
& \centering $\asymp n^{-\max(r(\bgamma),1/2)}$ \\ Theorem \ref{thm3}
& \centering $\asymp n^{-r(\bgamma)}$ \\ if $r(\bgamma)>0$, Theorem \ref{thm7}
\tabularnewline
\hline
\raggedleft Function values
& \centering $\preceq n^{-\max(r(\bgamma)/[1+1/(2r(\bgamma))],1/4)}$
\\  Theorem \ref{thm4} and \ref{thm5}
& \centering $\preceq n^{-r(\bgamma)/[1+1/(2r(\bgamma))]}$\\
 if $r(\bgamma)>1/2$,   Corollary \ref{cor2}
\tabularnewline
\hline
\end{tabular}
\end{table}

The notation $\preceq n^{-p}$ in
Table  \ref{summarytable} means that for all $\delta > 0$
the error is bounded \emph{above} by $Cn^{-p+\delta}$
for some constant $C$ that is independent of the sample size, $n$,
and the dimension, $d$, but it may depend on $\delta$.
The notation $\succeq n^{-p}$
is defined analogously, and means that the error is
bounded \emph{below} by $Cn^{-p-\delta}$ for all $\delta > 0$.
The notation $\asymp n^{-p}$ means that the error is
both $\preceq n^{-p}$  and $\succeq n^{-p}$.

As can be seen in Table \ref{summarytable}, the convergence rates
depend strongly on how fast the sequence of shape parameters
$\bgamma=\{\gamma_\ell\}_{\ell\in \naturals}$ goes to zero.  The term
$r(\bgamma)$ appearing in Table \ref{summarytable}, is defined by
\begin{equation} \label{wgammaform}
r(\bgamma)=\sup\bigg\{\beta>0\, \bigg |\ \sum_{\ell=1}^{\infty}
\gamma_\ell^{1/\beta} < \infty \bigg\}
\end{equation}
with the convention that the supremum of the empty set is taken to be zero.

For instance, for the isotropic case with
$\gamma_\ell=\gamma>0$
we have $r(\bgamma)=0$, whereas for
$\gamma_\ell=\ell^{-\alpha}$ for a nonnegative $\alpha$ we have
$r(\bgamma)=\alpha$.
If the $\gamma_\ell$ are ordered, that is, $\gamma_{1} \ge \gamma_{2}
\ge \cdots$, then this definition is equivalent to
$$
r(\bgamma)=\sup\big\{\beta\ge0\,|\ \lim_{\ell\to\infty}
\gamma_\ell\,\ell^\beta=0\big\}.
$$

For excellent
\emph{dimension independent convergence} one needs the sequence of
shape parameters to decay to zero quickly, as can be seen in Table
\ref{summarytable}.  These results are derived in Sections
\ref{abserrsec} and \ref{normerrsec}.

\begin{table}
\caption{Number of data, $n(\e,H_d)$, needed to obtain an error tolerance $\varepsilon$ \label{summarycomplexitytable}}
\centering
\renewcommand{\arraystretch}{1.5}
\begin{tabular}{p{1.3in}||p{2in}|p{2in}}
& \multicolumn{2}{c}{Error Criterion}\\
\raggedleft Data Available & \centering Absolute & \centering Normalized \tabularnewline
\hline
\raggedleft Arbitrary \\ Linear functionals
& \centering $\asymp \e^{-\min(1/r(\bgamma),2)}$ \\ Theorem \ref{thm3}
& \centering $\asymp \e^{-1/r(\bgamma)}$ \\ if $r(\bgamma)>0$, Theorem \ref{thm7}
\tabularnewline
\hline
\raggedleft Function values
& \centering $\preceq \e^{-\min(1/r(\bgamma)+1/[2r^2(\bgamma)],4)}$
\\  Theorem \ref{thm4} and \ref{thm5}
& \centering $\preceq \e^{-1/r(\bgamma)-1/[2r^2(\bgamma)]}$\\
 if $r(\bgamma)>1/2$,   Corollary \ref{cor2}
\tabularnewline
\hline
\end{tabular}
\end{table}

While writing the error as a function of the sample size is common in the numerical analysis literature, the computational complexity literature looks at the number of data required to obtain a given error tolerance.  Let
$n(\e,H_d)$ denote the minimal number of function values or linear
functionals that are needed to compute an $\e\cdot{\rm CRI}_d$ approximation.
Here, ${\rm CRI}_d=1$ for the absolute error criterion, and $
{\rm CRI}_d=\|I_d\|$ for the normalized error criterion.  Again, $\|I_d\|$ is the initial error that can
be achieved by the zero algorithm without sampling the functions.  The tractability results presented in this paper are summarized in Table \ref{summarycomplexitytable}.

For the absolute error and algorithms that use arbitrary
linear functionals, we prove \emph{strong polynomial tractability}
for \emph{all} choices of shape parameters $\gamma_\ell$.
Furthermore, the exponent $2$ of $\e^{-1}$ is best possible for all $\gamma_\ell$'s that go to zero no
faster that $\ell^{-2}$.  For the absolute error and algorithms that use function values, we still have strong polynomial tractability with exponent at most $4$.

For the normalized error, the situation is much worse. If the sequence of shape parameters tends to zero fast enough, we still have strong polynomial tractability.  However, for the isotropic case it follows
that $n(\e,H_d)$ does \emph{not} depend polynomially
on $\e^{-1}$ and $d$.
For algorithms using arbitrary linear functionals,
we have \emph{quasi-polynomial
tractability}, i.e., there are positive $C$ and $t$ such that
$$
n(\e,H_d)\le C\,\exp(t\,\left(1+\ln\,d)\,(1+\ln\,\e^{-1})\right)
\ \ \ \mbox{for all}\ \  \ \e\in(0,1)\ \ \mbox{and}\ \ d\in\naturals.
$$
Furthermore, the smallest $t$ is roughly\footnote{In this paper, by
  $\ln$ we mean the natural logarithm of base $e$.}
$$
t=2/{\ln\left(
\frac{1+2\gamma^2+\sqrt{1+4\gamma^2}}{2\gamma^2} \right) }.
$$

As a prelude to deriving these convergence
and tractability results, the next section reviews
some principles of function approximation on Hilbert spaces.
Section \ref{radfunsec} applies these principles to the Gaussian kernel.

\section{Function Approximation} \label{funapproxsec}
Let $H_d=H(K_d)$ denote a reproducing kernel Hilbert space of
real functions defined on a Lebesgue measurable
set $D_d\subseteq \reals^d$.  The goal is to accurately
approximate any function in $H_d$ given a finite number of
data about it.  The reproducing kernel
$$
K_d:D_d\times D_d\to\reals
$$
is symmetric, positive definite and reproduces function values.
This means that
for all $n\in \naturals$, $\bx,\bt,\bx_1,\bx_2,\dots,\bx_n\in D_d$,
$\c=(c_1,c_2,\dots,c_n)\in\reals^n$ and $f\in H_d$,
the following properties hold:
\begin{subequations} \label{Kassump}
\begin{eqnarray}
K_d(\cdot,\bx)&\in&H_d,\\
K_d(\bx,\bt)&=&K_d(\bt,\bx),\\
\sum_{i=1}^n K_d(\bx_i,\bx_j) c_i c_j &\ge& 0,\\
f(\bx) &=& \il f, K_d(\cdot,\bx)\ir_{H_d}.
\end{eqnarray}
\end{subequations}
For an arbitrary $\bx\in D_d$
consider the linear functional $L_\bx(f)=f(\bx)$ for all $f\in H_d$.
Then $L_\bx$  is
continuous and $\|L_\bx\|_{H_d^*}=K_d^{1/2}(\bx,\bx)$.
The reader may find these and other properties in
e.g.,~\cite{BTA04,Wah90}.
Many reproducing kernels are used in practice.
A popular choice is the Gaussian kernel defined in
\eqref{Gausskernelboth} for which $D_d=\reals^d$.

It is assumed that $H_d$ is continuously embedded in the space
$\cl_2=\cl_2(D_d,\rho_d)$ of square Lebesgue integrable functions.
Here, $\rho_d$
is a probability density function, i.e., $\rho_d\ge 0$ and
$\int_{D_d}\rho_d(\bt)\,{\rm d}\bt=1$. The norm in the space $\cl_2$ is
given by
$$
\|f\|_{\cl_2}=\left(\int_{D_d}f^2(\bt)\,\rho_d(\bt)\,{\rm d}\bt\right)^{1/2}.
$$
Continuous embedding means that the linear embedding operator
$I_d:H_d\to \cl_2$ given by $I_df=f$ is continuous,
$$
\|I_df\|_{\cl_2}\le \|I_d\|\ \|f\|_{H_d}\ \ \ \ \mbox{for all}\ \ \  f\in H_d.
$$
Observe that
\begin{eqnarray*}
\|I_df\|^2_{\cl_2}&=&\int_{D_d}f^2(\bt)\,\rho_d(\bt)\,{\rm d}\bt=
\int_{D_d}\il f,K_d(\cdot,\bt)\ir^2_{H_d}\,\rho_d(\bt)\,{\rm d}\bt\\
&\le&\|f\|^2_{H_d}\,\int_{D_d}K_d(\bt,\bt)\,\rho_d(\bt)\,{\rm d}\bt.
\end{eqnarray*}
Hence, it is enough to assume that
\begin{equation}\label{embedding}
\int_{D_d}K_d(\bt,\bt)\,\rho_d(\bt)\,{\rm d}\bt<\infty
\end{equation}
to guarantee that $I_d$ is continuous, and obviously
$$
\|I_d\|\le \left[\int_{D_d}K_d(\bt,\bt)\,\rho(\bt)\,{\rm
    d}\bt\right]^{1/2}.
$$

Functions in $H_d$ are approximated by linear
algorithms\footnote{It is well
known that adaption and nonlinear algorithms do not help
for approximation of linear problems. A linear problem
is defined as a linear operator and we approximate its values
over a set that
is convex and balanced. The typical
example of such a set is the unit ball as taken in this paper.
Then among all algorithms that
use linear adaptive functionals, the worst case error is minimized
by a linear algorithm that uses nonadaptive linear functionals.
Adaptive choice of a linear functional means that the choice of $L_j$
in~\eqref{linearnonadaptive} may depend on the already computed values
$L_i(f)$ for $i=1,2,\dots,j-1$. That is why in our case,
the restriction to linear algorithms of the
form~\eqref{linearnonadaptive} can be done without loss of
generality, for more detail see, e.g., \cite{TWW88}.}
\begin{equation}\label{linearnonadaptive}
A_n(f)=\sum_{j=1}^nL_j(f)a_j\ \ \ \mbox{for all}\ \ \  f\in H_d
\end{equation}
for some continuous linear functionals $L_j\in H_d^*$,
and functions $a_j\in \cl_2$.
The \emph{worst case error} of the algorithm $A_n$ is then defined as
$$
e^{\rm wor}(A_n)=\sup_{\|f\|_{H_d}\le1}\|f-A_n(f)\|_{\cl_2}.
$$

The linear algorithms $A_n$ considered here are based on function
data $L_j(f)$, where the continuous linear functionals $L_j$ may
belong to one of two classes. The first class, denoted $\lstd$, is comprised
only of function values and is called \emph{standard}.
That is, $L_j\in\lstd$ iff $L_j(f)=f(\bt_j)$ for all $f\in
H_d$ for some $\bt_j\in D_d$. The second class, denoted $\lall$,
is comprised of arbitrary continuous functionals and is
called \emph{linear}. That is, $L_j\in\lall$ iff $L_j\in H_d^*$.
Obviously, $\lstd\subseteq\lall$.

The aim is to determine how small the worst case error can be by
choosing linear algorithms with only $n$ linear functionals
either from $\lstd$ or~$\lall$.  The \emph{$n$th minimal worst case
  error} is defined as
\begin{equation*}
e^{\rm wor-\stdall}(n,H_d)
=\inf_{A_n \ {\rm with}\ L_j\in\Lambda^{\stdall} }e^{\rm
  wor}(A_n), \quad \stdall \in \{{\rm std},{\rm all}\}.
\end{equation*}
Here and below, for notational simplicity
$\stdall$ denotes either the standard or linear setting.
Clearly, $e^{\rm wor-all}(n,H_d)\le e^{\rm wor-std}(n,H_d)$
since the former uses a larger class of function data.

The case $n=0$ means that no linear functionals of $f$ are used to
construct the algorithm.  It is easy to see that
the best  algorithm possible is $A_0=0$, and then
$$
e^{\rm wor-\stdall}(0,H_d)=\|I_d\|.
$$
The minimal error for $n=0$ is called the initial error and it only
depends on the formulation of the problem.

This article addresses two problems: convergence and tractability.
The former considers how fast the error vanishes as $n$ increases,
and the latter considers how the error depends on the dimension, $d$,
as well as the number of data, $n$.

\vskip 1pc
\noindent {\bf Problem 1:\ Rate of Convergence}\
\vskip 1pc
We would like to know how fast
$e^{\rm wor-\stdall}(n,H_d)$
goes to zero
as $n$ goes to infinity. In particular, we study the rate of
convergence (defined by~\eqref{wgammaform}) of
the sequence $\{e^{\rm wor-\stdall}(n,H_d)\}_{n\in\naturals}$. Since the numbers $e^{\rm wor-\stdall}(n,H_d)$
are ordered, we have
\begin{equation}\label{rate}
r^{\rm wor-\stdall}(H_d):=r\left(\{e^{\rm wor-\stdall}(n,H_d)\}\right)
=\sup\left\{\beta\ge0\,|\ \ \lim_{n\to\infty}
e^{\rm wor-\stdall}(n,H_d)
\,n^\beta=0\right\}.
\end{equation}
Roughly speaking, the rate of convergence is the largest $\beta$
for which the $n$th minimal errors behave no worse than $n^{-\beta}$.
For example, if $e^{\rm wor-\stdall}(n,H_d)=n^{-\alpha}$ for a
positive $\alpha$  then $r^{\rm wor-\stdall}(H_d)=\alpha$.
Under this definition, even sequences of the form
$e^{\rm wor-\stdall}(n,H_d)=n^{-\alpha} \ln^p n$ for an arbitrary $p$
still have $r^{\rm wor-\stdall}(H_d)=\alpha$.
On the other hand, if $e^{\rm wor-\stdall}(n,H_d)=q^{n}$ for a
number $q\in(0,1)$ then $r^{\rm wor-\stdall}(H_d)=\infty$.

Obviously, $r^{\rm wor-all}(H_d)\ge r^{\rm wor-std}(H_d)$. We would
like to know both rates and whether
$$
r^{\rm wor-all}(H_d)>r^{\rm wor-std}(H_d),
$$
i.e., whether  $\lall$ admits a better rate of convergence
than $\lstd$.

\vskip 1pc
\noindent {\bf Problem 2:\  Tractability}\
\vskip 1pc
Assume that there is a sequence of spaces $\{H_d\}_{d \in \naturals}$ and embedding
operators $\{I_d\}_{d \in \naturals}$.
In this case, we would like to know how the minimal errors
$e^{\rm wor-\stdall}(n,H_d)$
depend not only on $n$ but also on $d$.

More precisely, we consider the~\emph{absolute} and~\emph{normalized}
error criteria.
For a given (small) positive $\e\in(0,1)$ we want to find
an algorithm $A_n$ with the smallest~$n$ for which the error does not
exceed $\e$ for the absolute error criterion, and does not exceed
$\e\,\|I_d\|$ for the normalized error criterion. That is,
$$
n^{\rm wor-\absnorm-\stdall}(\e,H_d)
= \min\left\{n\,|\
e^{\rm wor-\stdall}(n,H_d)
\le\e\,{\rm CRI}^{\absnorm}_d\right\}, \quad \absnorm \in \{ {\rm abs},{\rm norm} \},
$$
where ${\rm CRI}^{\rm abs}_d=1$ for the absolute error criterion
and ${\rm CRI}^{\rm nor}_d=\|I_d\|$ for the normalized error criterion.

Let $\mathcal{I}=\{I_d\}_{d \in \naturals}$ denote the sequence of function approximation
problems. We say that $\mathcal{I}$ is~\emph{polynomially tractable} iff
there exist numbers $C$, $p$ and $q$ such that
$$
n^{\rm wor-\absnorm-\stdall}(\e,H_d)\le C\,d^{\,q}\,\e^{-p}\ \ \
\mbox{for all}\ \ \ d\in \naturals\ \ \mbox{and}\ \ \ \e\in(0,1).
$$
If $q=0$ above then we say that $\mathcal{I}$ is~\emph{strongly polynomially
  tractable} and the infimum of~$p$ satisfying the bound above is
called the~\emph{exponent} of strong polynomial tractability.

The essence of polynomial tractability is to guarantee that
a polynomial number of linear functionals is enough to satisfy
the function approximation problem to within $\e$.
Obviously, polynomial tractability depends on which class, $\lall$ or
$\lstd$, is considered and whether the absolute or normalized error is used.
As shall be shown, the results on polynomial tractability depend on
the cases considered.

The property of strong polynomial tractability is especially
challenging since then the number of linear functionals needed for an
$\e$-approximation is independent of~$d$. The reader may suspect that
this property is too strong and cannot happen for function
approximation. Nevertheless, there are positive results to report on
strong polynomial tractability.

Besides polynomial tractability, there are the
somewhat less demanding concepts such as
quasi-polynomial tractability and weak tractability.
The problem $\mathcal{I}$ is~\emph{quasi-polynomially tractable} iff
there exist numbers $C$ and $t$ for which
$$
n^{\rm wor-\absnorm-\stdall}(\e,H_d)
\le C\,\exp\left(t\,
\ln(1+d)\,\ln(1+\e^{-1})\right)
$$
for all $d\in \naturals$ and $\e>0$.
The exponent of quasi-polynomial tractability is defined as
the infimum of $t$ satisfying the bound above.
Finally, $\mathcal{I}$ is~\emph{weakly tractable} iff
$$
\lim_{\e^{-1}+d\to\infty}\frac{\ln\,n^{\rm
    wor-\absnorm-\stdall}(\e,H_d)}{\e^{-1}+d}=0.
$$
Note that for a fixed $d$, quasi-polynomial tractability means that
$$
n^{\rm wor-\absnorm-\stdall}(\e,H_d)=\mathcal{O}\left(\e^{-t(1+\ln\,d)}\right)
\ \ \ \mbox{as}\ \ \ \e\to0.
$$
Hence, the exponent of $\e^{-1}$ may now weakly depend on $d$ through
$\ln\,d$.
On the other hand, weak tractability only means that we do not have
exponential dependence on $\e^{-1}$ and $d$.

We will report about quasi-polynomial and weak tractability in the
case when polynomial tractability does not hold. As before,
quasi-polynomial and weak tractability depend on which class $\lall$ or
$\lstd$ is considered and on the error criterion.

Motivation of tractability study and more on tractability concepts can be
found in~\cite{NW08}. Quasi-polynomial tractability has been recently
studied in~\cite{GW10}.

\vskip 2pc
We end this section by briefly reviewing some general results related to the problems of convergence and tractability mentioned above.
For a given design, i.e., given continuous linear functionals $L_1, \ldots, L_n$, it is known how to find
functions $a_1, \ldots, a_n$ for which the worst case error of $A_n$ is minimized.
The optimal algorithm, $S_n$, should be taken as the~\emph{spline} or
the~\emph{minimal norm interpolant}, see e.g.~Section 5.7 of~\cite{TWW88}.  The spline algorithm was briefly mentioned in the introduction.  It is described in more generality here.

For given $y_j=L_j(f)$ for $j=1,2,\dots,n$, we take
$S_n(f)$ as an element of $H_d$ that satisfies the conditions
\begin{eqnarray*}
L_j(S_n(f))&=&y_j\ \ \ \ \ \ \ \qquad \qquad\qquad \qquad \qquad
\mbox{for }\ \ \ j=1,2,\dots,n,\\
\|S_n(f)\|_{H_d}&=&\inf_{g\in H_d,\
L_j(g)=y_j,\  j=1,2,\dots,n}\|g\|_{H_d}.
\end{eqnarray*}
The construction of $S_n(f)$ may be done by solving a linear equation
$\mK \bc=\by$, where $\by=(y_1,y_2,\dots,y_n)^T$ and the $n\times n$
matrix is given by
$$
\mK=(k_{i,j})_{i,j=1}^n
\ \ \ \mbox{with}\ \ \
k_{i,j}=L_i(g_j)\ \ \mbox{and}\ \  \ g_j(\bx)=L_jK_d(\cdot,\bx).
$$
Then
$$
S_n(f)(\bx)=
\bk^T(\bx)\mK^{-1}\by \ \ \ \mbox{with}\ \ \
\bk(\bx)=(L_iK_d(\cdot,\bx))_{i=1}^n
$$
and
$$
e^{\rm wor}(S_n)=\sup_{\|f\|_{H_d}\le1,\
  L_j(f)=0,\,j=1,2,\dots,n}\|f\|_{\cl_2}.
$$
Note that depending on the choice of linear functionals
$L_1,\ldots,L_n$ the matrix $\mK$ may not necessarily be invertible,
however, the solution $\bc=\mK^{-1}\by$ is always well defined as
the vector of minimal Euclidean norm which satisfies
$\mK\bc=\by$.

The spline enjoys more optimality properties. For
instance, it minimizes the~\emph{local} worst case error.  Roughly speaking this means that for each $\bx\in D_d$, the worst possible pointwise error $|f(\bx) - A_n(f)(\bx)|$ over the unit ball of functions $f$ is minimized over all possible $A_n$ by choosing $A_n=S_n$.  We do not elaborate more on this point.

It is non-trivial to find the linear functionals $L_j$ from the 
class $\lstd$ that  minimize the error of 
the spline algorithm $S_n$. For the class $\lall$, 
the optimal design is known, at least theoretically, 
see again e.g.,~\cite{TWW88}.
Namely, let $W_d=I_d^*I_d:H_d\to H_d$,
where $I_d^*: \cl_2 \to H_d$ denotes the adjoint of 
the imbedding operator, i.e., the operator satisfying 
$\il f, I^*_d h \ir_{H_d}=\il I_d f, h \ir_{\cl_2}$ 
for all $f \in H_d$ and $h \in \cl_2$. As a consequence, $W_d$ is a
self adjoint and positive definite linear operator given by
$$
W_d(f)=\int_{D_d}f(\bt)\,K_d(\cdot,\bt)\,\rho_d(\bt)\,{\rm d}\bt\ \ \
\mbox{for all}\ \ \ f\in H_d.
$$
Clearly,
$$
\il f,g\ir_{\cl_2}=
\il I_df,I_dg\ir_{\cl_2}=\il W_df,g\ir_{H_d}=\il f,W_dg\ir_{H_d}
\ \ \ \mbox{for all}\ \ \ f,g\in
H_d.
$$
It is known that $\lim_{n\to\infty}
e^{\rm wor-all}(n,H_d)=0$
iff $W_d$ is compact.
In particular,~\eqref{embedding} implies that $W_d$ is compact.

Assuming that $W_d$ is compact, let us define its eigenpairs by
$(\lambda_{d,j},\eta_{d,j})$, where
the eigenvalues are ordered, $\lambda_{d,1}\ge\lambda_{d,2}\ge\cdots$, and
$$
W_d\,\eta_{d,j}=\lambda_{d,j}\,\eta_{d,j} \ \ \ \mbox{with}\ \ \
\il \eta_{d,j},\eta_{d,i}\ir_{H_d}=\delta_{i,j} \ \ \mbox{for all}\ \
i,j\in\naturals.
$$
Note also that for any $f\in H_d$ we have
$$
\il f,\eta_{d,j}\ir_{\cl_2}=\il I_df,I_d\eta_{d,j}\ir_{\cl_2}=
\il f,W_d\eta_{d,j}\ir_{H_d}=\lambda_{d,j}\il f,\eta_{d,j}\ir_{H_d}.
$$
Taking $f=\eta_{d,i}$ we see that $\{\eta_{d,j}\}$ 
us a set of orthogonal functions in $\cl_2$.
For simplicity and without loss of generality we assume that all $\lambda_{d,j}$
are positive\footnote{Otherwise, we should switch to a subspace of
$H_d$ spanned by eigenfunctions corresponding to $k$ positive
eigenvalues, and replace $\naturals$ by $\{1,2,\dots,k\}$.}.
Letting
$$
\varphi_{d,j}=\lambda_{d,j}^{-1/2}\eta_{d,j}\ \ \
\mbox{for all}\ \ \ j\in\naturals
$$
we obtain an orthonormal sequence $\{\varphi_{d,j}\}$ in $\cl_2$.
Since $\{\eta_{d,j}\}$ is a complete
orthonormal basis of $H_d$ we have
\begin{equation}\label{formofkd}
K_d(\bx,\bt)=\sum_{j=1}^\infty\eta_{d,j}(\bx)\,\eta_{d,j}(\bt)=
\sum_{j=1}^\infty\lambda_{d,j}\,\varphi_{d,j}(\bx)\,
\varphi_{d,j}(\bt) \ \ \
\mbox{for all}\ \ \ \bx,\bt\in D_d.
\end{equation}
If~\eqref{embedding} holds then
\begin{equation}\label{sumeig}
\sum_{j=1}^\infty\lambda_{d,j}=\int_{D_d}K_d(\bt,\bt)\,\rho_d(\bt)\,
{\rm d}\bt <\infty.
\end{equation}
This means that \eqref{embedding} implies that
$W_d$ is also a finite trace operator.

It is
known that the best choice of $L_j$ for the class $\lall$
is $L_j=\il \cdot,\eta_{d,j}\ir_{H_d}$. Then the spline algorithm $S_n$ with
the minimal worst case error is defined using the eigenfunctions corresponding to the $n$ largest eigenvalues:
$$
S_n(f)=\sum_{j=1}^n\il f,\eta_{d,j}\ir_{H_d}\eta_{d,j}\ \ \ \ \
\mbox{for all}\ \ \  f\in H_d,
$$
and
$$
e^{\rm wor}(S_n)=e^{\rm wor-all}(n,H_d)=\sqrt{\lambda_{d,n+1}}\ \
\ \ \ \mbox{for all}\ \ \ n\in\naturals.
$$
The last formula for $n=0$ yields that the initial error
is $\|I_d\|=\sqrt{\lambda_{d,1}}$.

The results for the class $\lall$ are useful for finding rates of
convergence as well as necessary and
sufficient conditions on
polynomial, quasi-polynomial and weak tractability in terms of the
behavior of the eigenvalues $\lambda_{d,j}$. This has already been
done in a number
of papers or books, and we will report these results later for spaces
studied in this paper.
For the class $\lstd$, the situation is much harder although there are
papers that relate rates of convergence and tractability conditions
between classes $\lall$ and $\lstd$. Again we report these results later.

\section{Radial Function Spaces} \label{radfunsec}
The focus of this article is on reproducing kernels
that are \emph{translation invariant} or \emph{stationary}, i.e.,
$$
K_d(\bx,\bt) = \tK_d(\bx-\bt)\ \ \ \mbox{for all}
\ \ \  \bx,\bt\in D_d=\reals^d.
$$
An even more special case is for
\emph{radially symmetric} or \emph{isotropic} kernels,
i.e.,
$$
K_d(\bx,\bt) = \kappa(\|\bx-\bt\|^2_2)\ \ \ \mbox{with}\ \ \
\|\bx-\bt\|_2^2=\sum_{\ell=1}^d(x_\ell-t_\ell)^2.
$$
Here, $\tK_d$ and $\kappa$ are chosen such that $K_d$ is a reproducing
kernel.

Isotropic kernels also go by the name \emph{radial basis functions},
and the spaces $H(K_d)$ are referred to as~\emph{radial function spaces}.
Stationary or isotropic kernels are common in the literature on
computational mathematics \cite{Buh03,Fas07,Wen05}, statistics \cite{BTA04,Ste99,Wah90}, statistical learning \cite{RW06,SC08}, and engineering applications \cite{FSK08}.

A popular isotropic kernel is the Gaussian kernel, defined in \eqref{Gausskernelboth}, which has both an isotropic version,
\begin{equation*}
K_d(\bx,\bt) = e^{-\gamma^2 \|\bx-\bt\|^2} \ \ \ \mbox{for all}
\ \ \ \bx,\bt\in\reals^d ,
\end{equation*}
and a more general anisotropic version,
\begin{equation*}
K_d(\bx,\bt) = e^{-\gamma_1^2 (x_1-t_1)^2 -\, \cdots \,
- \gamma_d^2 (x_d-t_d)^2}\ \ \ \mbox{for all}
\ \ \ \bx,\bt\in\reals^d.
\end{equation*}
As alluded to in the introduction, the shape parameter, 
$\gamma$ or $\bgamma=\{\gamma_\ell\}_{\ell \in \naturals}$, 
which functions as an inverse length scale, plays an important 
role in the tractability of function approximation.  
Choosing the $\gamma_\ell$ to decay quickly has a beneficial effect
on the rate of decay of the eigenvalues of the Gaussian kernel, as
we shall see below.
On the other hand, a small value of $\gamma$
leads to a huge condition number of the matrix $\mK$
and may result in severe numerical instabilities. While this is an issue
that is very important for practical implementations, and has received some attention,
we will not discuss it any further here.

We now analyze the function approximation problem
for the Hilbert space $H_d=H(K_d)$ with the isotropic Gaussian kernel
$K_d$ given by~\eqref{Gausskernel} or, more generally,
with the anisotropic Gaussian
kernel given by~\eqref{anisoGauss}. For the space
$\cl_2(\reals^d,\rho_d)$ we take the Gaussian weight with zero mean and
variance $1/2$, i.e.,
$$
\rho_d(\bt)=\frac1{\pi^{d/2}}\exp\left(-(t_1^2+t_2^2+\cdots+t_d^2\right))\ \ \
\ \ \mbox{for all} \ \ \ \bt\in \reals^d.
$$
Note that $K_d(\bt,\bt)=1$ for all $\bt\in\reals^d$, and
therefore
$$
\int_{\reals^d}K_d(\bt,\bt)\,\rho_d(\bt)\,{\rm d}\bt=1,
$$
so that~\eqref{embedding} holds. This means that the embedding $I_d$
is continuous, the operator $W_d$ is compact and a finite trace
operator with
\begin{equation}\label{sumeigone}
\sum_{j=1}^\infty\lambda_{d,j}=1,
\end{equation}
by \eqref{sumeig}.

Observe that~\eqref{embedding} holds for all translation invariant
kernels since
$$
\int_{\reals^d}K_d(\bt,\bt)\,\rho_d(\bt)\,{\rm d}\bt=\tK_d(\bzero),
$$
but it can now depend on $d$. For radially symmetric kernels we have
$$
\int_{\reals^d}K_d(\bt,\bt)\,\rho_d(\bt)\,{\rm d}\bt=\kappa(0),
$$
and it is independent of $d$.

Since a Gaussian kernel $K_d$ is of the product form,
the space $H_d$ is the tensor
product of the Hilbert spaces of univariate spaces with the kernels
$e^{-\gamma_\ell^2(x-t)^2}$ for $x,t\in\reals$.
This also implies that the operator $W_d$ is of the product form and
its eigenpairs are products of the corresponding eigenpairs for the
univariate cases.

Consider now $d=1$, and the space $H(K_1)$ with
$K_1(x,t)=e^{-\gamma^2(x-t)^2}$.
Then the eigenpairs $\left(\tlambda_{\gamma,j},\eta_{\gamma,j}\right)$
of $W_1$ are known, see \cite{RW06}.
Note that we have introduced the notation 
$\tlambda_{\gamma,j}$ to emphasize the dependence 
of the eigenvalues on $\gamma$ in the following 
discussion (while the dependence on $d$ 
has temporarily dropped from the notation).
We have
\begin{equation*}
\tlambda_{\gamma,j} =
\frac{1}{\sqrt{\frac 12 (1 + \sqrt{1 + 4 \gamma^2}) + \gamma^2}}
\left(\frac{\gamma^2}{\frac 12 (1 + \sqrt{1 + 4 \gamma^2}) +
\gamma^2} \right)^{j-1}= (1-\omega_{\gamma} )\,\omega^{j-1}_{\gamma},
\end{equation*}
where
\begin{equation}
\label{omegadef}
\omega_\gamma=\frac{\gamma^2}{\tfrac12(1+\sqrt{1+4\gamma^2})+\gamma^2},
\end{equation}
and $\eta_{\gamma,j}=\sqrt{\tlambda_{\gamma,j}}\,\varphi_{\gamma,j}$ with
$$
\varphi_{\gamma,j}(x) =
\sqrt{\frac{(1+4\gamma^2)^{1/4}}{2^{j-1}(j-1)!}}
\exp\left(- \frac{\gamma^2 x^2}
{\frac 12 (1 + \sqrt{1 + 4 \gamma^2})} \right)
H_{j-1}\left((1+4\gamma^2)^{1/4} x\right),
$$
where $H_{j-1}$ is the Hermite polynomial of degree $j-1$, given by
$$
H_{j-1}(x) = (-1)^{j-1} e^{x^2}\frac{{\rm d}^{j-1}}{{\rm  d}x^{j-1}}e^{-x^2}
\ \ \ \mbox{for all}\ \ \ x\in\reals,
$$
so that
$$
\int_{\reals} H_{j-1}^2(x)\, e^{-x^2}\, {\rm d}x = \sqrt{\pi}\,
2^{j-1} (j-1)! \qquad \mbox{for}\ \ j=1,2,\ldots\, .
$$
Obviously, we have
$$
\il \eta_{\gamma,i},\eta_{\gamma,j}\ir_{H(K_1)}=
\il \varphi_{\gamma,i},\varphi_{\gamma,j}\ir_{\cl_2} = \delta_{ij},
$$
and applying~\eqref{formofkd} we obtain
$$
K_1(x,t) = e^{-\gamma^2 (x-t)^2} = \sum_{j=1}^{\infty}
\tlambda_{\gamma,j}
\varphi_{\gamma,j} (x)\varphi_{\gamma,j}(y)\ \ \
\ \ \mbox{for all}\ \ \ x,t\in\reals.
$$

Note that the eigenvalues $\tlambda_{\gamma,j}$ are ordered.
The largest eigenvalue is
$$
\tlambda_{\gamma,1}=
1 - \omega_{\gamma} =\sqrt{\frac2{1+\sqrt{1+4\gamma^2}+2\gamma^2}}=
1-\gamma^2+\mathcal{O}(\gamma^4)\ \ \
\mbox{as}\ \ \ \gamma\to0.
$$
Furthermore,
\begin{equation}\label{asymeig}
\tlambda_{\gamma,j}=\left(1-\gamma^2+\mathcal{O}(\gamma^4)\right)
\left(\frac{\gamma^2}{1-\gamma^2+\mathcal{O}(\gamma^4)}\right)^{j-1}
\ \ \ \mbox{for}\ \ j=1,2,\dots\,.
\end{equation}
The space $H(K_1)$ consists of analytic functions for which
$$
\|f\|^2_{H(K_1)}=\sum_{j=1}^\infty\il f,\eta_{\gamma,j}\ir^2_{H(K_1)}
= \sum_{j=1}^\infty\frac 1{\tlambda_{\gamma,j}}
\il f,\varphi_{\gamma,j}\ir^2_{\cl_2} <\infty.
$$
This means that the coefficients of $f$ in the space $\cl_2$ decay
exponentially fast. The inner product is obviously given as
\begin{multline*}
\il f,g\ir_{H(K_1)}=
\sum_{j=1}^\infty \frac1{\tlambda_{\gamma,j}}
\int_{\reals}f(t)\,\frac{\varphi_{\gamma,j}(t)}{\sqrt{\pi}}\,e^{-t^2}\,{\rm d}t
\int_{\reals}g(t)\,\frac{\varphi_{\gamma,j}(t)}{\sqrt{\pi}}\,e^{-t^2}\,{\rm d}t
\\ \mbox{for all}\ \ \ f,g\in H(K_1).
\end{multline*}
The reader may find more about the characterization of the space
$H(K_1)$ in~\cite{SHS06}.

\vskip 1pc
For $d>1$, let $\bgamma=\{\gamma_\ell\}_{\ell \in \naturals}$
and $\bj=(j_1,j_2,\dots,j_d)\in\naturals^d$.
As already mentioned, the eigenpairs
$\left(\tlambda_{d,\bgamma,\bj},\eta_{d,\bgamma,\bj}\right)$ of $W_d$ are
given by the products
\begin{align}
\label{tlambarbd}
\tlambda_{d,\bgamma,\bj}& =\prod_{\ell=1}^d\tlambda_{\gamma_{\ell},j_\ell}
=\prod_{\ell=1}^d
\frac{1}{\sqrt{\frac 12 (1 + \sqrt{1 + 4 \gamma_\ell^2}) + \gamma_\ell^2}}
\left(\frac{\gamma_\ell^2}{\frac 12 (1 + \sqrt{1 + 4 \gamma_\ell^2}) +
\gamma_\ell^2} \right)^{j_\ell-1}\\
\nonumber
& = \prod_{\ell=1}^d (1-\omega_{\gamma_{\ell}} )\,\omega^{\,j_\ell-1}_{\gamma_{\ell}},
\end{align}
where $\omega_{\gamma}$ is defined above in \eqref{omegadef}, and
$$
\eta_{d,\bgamma,\bj}=\prod_{\ell=1}^d\sqrt{\tlambda_{\gamma_\ell,j_\ell}}\,
\varphi_{\gamma_\ell,j_\ell}
$$
with
$$
\il \eta_{d,\bgamma,\bi},\eta_{d,\bgamma,\bj}\ir_{H_d}=
\il \varphi_{\bgamma,\bi},\varphi_{\bgamma,\bj}\ir_{\cl_2} = \delta_{\bi \bj}.
$$

This section ends with a lemma describing the convergence of
the sums of powers of the eigenvalues for the multivariate problem,
and how these sums depend on the dimension, $d$.
This lemma is used in several of the theorems on convergence
and tractability in the following sections.

In the next sections, it
will be convenient to reorder the sequence of eigenvalues
$\{\tlambda_{d,\bgamma,\bj}\}_{\bj\in\naturals^d}$
as the sequence $\{\lambda_{d,j}\}_{j\in\naturals}$ with
$\lambda_{d,1}\ge\lambda_{d,2}\ge \cdots$.
Obviously, for the univariate case, $d=1$, we have
$\lambda_{1,j}=\tlambda_{1,\gamma_1,j}$ for all $j\in\naturals$,
but for the multivariate case, $d>1$, the correspondence between
$\lambda_{d,j}$ and $\tlambda_{d,\bgamma,\bj}$ is more complex.
Obviously,
$$
\lambda_{d,1}=\prod_{\ell=1}^d\left(1-\omega_{\gamma_\ell}\right).
$$
We now present a simple estimate of $\lambda_{d,n+1}$ that will be
needed for our analysis.

\begin{lemma} \label{lem1}  Let $\tau>0$.
Consider the Gaussian kernel with the
sequence of shape parameters
$\bgamma = \{\gamma_\ell\}_{\ell \in \naturals}$.
The sum of the $\tau^{\text{th}}$ power of the eigenvalues for
the $d$-variate case, $d\ge1$, is
\begin{equation} \label{sumpoweig}
\sum_{j=1}^\infty
\lambda_{d,j}^\tau = \sum_{\bj\in \naturals^d}\tlambda_{d,\bgamma,\bj}^\tau
=\prod_{\ell=1}^d\,\left(\sum_{j=1}^\infty
\tlambda_{\gamma_\ell,j}^\tau\right) = \prod_{\ell=1}^d
\frac{(1-\omega_{\gamma_{\ell}})^{\tau}}{1-\omega_{\gamma_\ell}^\tau}
\begin{cases} > 1, & 0 < \tau < 1, \\
=1, & \tau = 1.
\end{cases}
\end{equation}
The $(n+1)^{\text{st}}$ largest eigenvalue  satisfies
\begin{equation} \label{nplusfirst}
\lambda_{d,n+1}\le \frac1{(n+1)^{1/\tau}}\,\prod_{\ell=1}^d
\frac{1-\omega_{\gamma_{\ell}}}{(1-\omega_{\gamma_\ell}^\tau)^{1/\tau}}.
\end{equation}
\end{lemma}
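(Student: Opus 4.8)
The plan is to reduce everything to the one–dimensional geometric series and then to use the monotone ordering of the eigenvalues.

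\textbf{Step 1: the power sum (sumpoweig).} Since $\{\lambda_{d,j}\}_{j\in\naturals}$ is merely a reordering of the nonnegative family $\{\tlambda_{d,\bgamma,\bj}\}_{\bj\in\naturals^d}$, rearrangement of a series with nonnegative terms gives $\sum_{j=1}^\infty\lambda_{d,j}^\tau=\sum_{\bj\in\naturals^d}\tlambda_{d,\bgamma,\bj}^\tau$. By the product formula \eqref{tlambarbd}, $\tlambda_{d,\bgamma,\bj}^\tau=\prod_{\ell=1}^d\tlambda_{\gamma_\ell,j_\ell}^\tau$, and because all terms are nonnegative, Tonelli's theorem for series lets me factor the $d$-fold sum over $\bj\in\naturals^d$ as $\prod_{\ell=1}^d\bigl(\sum_{j=1}^\infty\tlambda_{\gamma_\ell,j}^\tau\bigr)$. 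Each univariate factor is geometric: writing $\tlambda_{\gamma_\ell,j}=(1-\omega_{\gamma_\ell})\,\omega_{\gamma_\ell}^{\,j-1}$ with $\omega_{\gamma_\ell}\in(0,1)$ (which holds since $\gamma_\ell>0$, as is clear from \eqref{omegadef}), one has $\sum_{j=1}^\infty\tlambda_{\gamma_\ell,j}^\tau=(1-\omega_{\gamma_\ell})^\tau\sum_{k=0}^\infty\omega_{\gamma_\ell}^{\tau k}=(1-\omega_{\gamma_\ell})^\tau/(1-\omega_{\gamma_\ell}^\tau)$, the series converging because $\omega_{\gamma_\ell}^\tau<1$. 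This gives the claimed product identity for every $\tau>0$.

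\textbf{Step 2: the case distinction.} At $\tau=1$ each factor is $(1-\omega_{\gamma_\ell})/(1-\omega_{\gamma_\ell})=1$, so the product equals $1$. For $0<\tau<1$ I would invoke the (strict) subadditivity of $t\mapsto t^\tau$ on $[0,\infty)$: with $a=1-\omega_{\gamma_\ell}>0$ and $b=\omega_{\gamma_\ell}>0$, $1=(a+b)^\tau<a^\tau+b^\tau$, i.e.\ $(1-\omega_{\gamma_\ell})^\tau>1-\omega_{\gamma_\ell}^\tau$, so every factor exceeds $1$; since $d\ge1$ the whole product exceeds $1$.

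\textbf{Step 3: the bound (nplusfirst).} This is immediate from $\lambda_{d,1}\ge\lambda_{d,2}\ge\cdots$: the $(n+1)$ largest terms satisfy $(n+1)\,\lambda_{d,n+1}^\tau\le\sum_{j=1}^{n+1}\lambda_{d,j}^\tau\le\sum_{j=1}^\infty\lambda_{d,j}^\tau=\prod_{\ell=1}^d(1-\omega_{\gamma_\ell})^\tau/(1-\omega_{\gamma_\ell}^\tau)$, and raising both sides to the power $1/\tau$ yields exactly \eqref{nplusfirst}.

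There is no genuinely hard step here; the only points that deserve a line of justification are the legitimacy of interchanging the infinite sum with the finite product (handled by nonnegativity, i.e.\ Tonelli's theorem for series) and the strict subadditivity inequality $(1-\omega)^\tau+\omega^\tau>1$ used for $\tau\in(0,1)$.
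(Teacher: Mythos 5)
Your proof is correct and follows essentially the same route as the paper: factor the power sum into univariate geometric series, verify the case distinction via the inequality $(1-\omega)^\tau+\omega^\tau>1$ for $\tau\in(0,1)$ (the paper derives this from concavity of $\omega\mapsto(1-\omega)^\tau-1+\omega^\tau$, you from strict subadditivity of $t\mapsto t^\tau$ — the same fact in different clothing), and then bound $\lambda_{d,n+1}^\tau$ by the average of the $n+1$ largest powered eigenvalues. Your Step 1 merely spells out the rearrangement and Tonelli justifications that the paper leaves implicit.
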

\begin{proof} Equation \eqref{sumpoweig} follows directly from the
formula for $\tlambda_{d,\bgamma,\bj}$ in  \eqref{tlambarbd}.
{}From the definition of $\omega_{\gamma}$ in \eqref{omegadef}
it follows that $0 < \omega_{\gamma} < 1$ for all $\gamma >0$.
For $\tau\in(0,1)$, consider the function
$$
f(\omega)=(1-\omega)^\tau-1 +\omega^\tau\ \ \ \mbox{for all}
\ \ \ \omega\in[0,1].
$$
Clearly, $f$ is concave and vanishes at $0$ and $1$,
and therefore $f(\omega)>0$ for all $\omega\in(0,1)$.
This yields the lower bound on the sum
of the power of the univariate eigenvalues.

The ordering of the eigenvalues $\lambda_{d,j}$ implies that
\[
\lambda_{d,n+1} \le
\bigg(\frac{1}{n+1} \sum_{j=1}^{n+1} \lambda_{d,j}^\tau \bigg)^{1/\tau}
\le \bigg(\frac{1}{n+1} \sum_{j=1}^{\infty} \lambda_{d,j}^\tau
\bigg)^{1/\tau} =\frac{1}{(n+1)^{1/\tau}}
\bigg(\sum_{j=1}^{\infty} \lambda_{d,j}^\tau \bigg)^{1/\tau}.
\]
This yields the upper bound on the $n+1^{\text{st}}$ largest
eigenvalue in \eqref{nplusfirst}, and completes the proof.
\end{proof}

The main point of \eqref{nplusfirst} is that this estimate holds
for all positive
$\tau$. This means that $\lambda_{d,n+1}$ goes to zero faster than any
polynomial in $(n+1)^{-1}$.

\section{Rates of Convergence for Gaussian Kernels} \label{convergesec}

In this section we consider the function approximation problem
for the Hilbert space $H_d=H(K_d)$ with the anisotropic Gaussian kernel
given by~\eqref{anisoGauss}. We stress that the sequence
$\bgamma=\{\gamma_\ell\}_{\ell=1}^{\infty}$ of shape parameters
can be arbitrary. In particular, we may consider
the isotropic case for which all $\gamma_\ell=\gamma>0$.

We want to verify how fast the minimal errors
$e^{\rm wor-all}(n,H_d)$
and $e^{\rm wor-std}(n,H_d)$
go to zero, and what the rate of convergence of these sequences is,
see~\eqref{rate}.
\begin{theorem}\label{thm1}
$$
r^{\rm wor-all}(H_d)
=r^{\rm wor-std}(H_d)=\infty.
$$
\end{theorem}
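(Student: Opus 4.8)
The plan is to prove both rates are infinite by exhibiting, for every $p>0$, an algorithm (indeed the optimal spline algorithm for the class $\lall$) whose worst case error decays faster than $n^{-p}$; since $\lstd\subseteq\lall$ gives $e^{\rm wor-std}(n,H_d)\ge e^{\rm wor-all}(n,H_d)$ for free, it suffices to prove $r^{\rm wor-std}(H_d)=\infty$ and then invoke $r^{\rm wor-all}(H_d)\ge r^{\rm wor-std}(H_d)=\infty$. Wait — that inequality goes the wrong way for what I want. Let me restructure: I will prove $r^{\rm wor-all}(H_d)=\infty$ directly using the eigenvalue estimate, and separately prove $r^{\rm wor-std}(H_d)=\infty$; the latter needs an actual construction or a known reduction from $\lall$ to $\lstd$ rates.

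First I would handle the class $\lall$. Recall from Section~\ref{funapproxsec} that $e^{\rm wor-all}(n,H_d)=\sqrt{\lambda_{d,n+1}}$. By Lemma~\ref{lem1}, for every $\tau>0$,
$$
\lambda_{d,n+1}\le \frac{1}{(n+1)^{1/\tau}}\,\prod_{\ell=1}^d
\frac{1-\omega_{\gamma_\ell}}{(1-\omega_{\gamma_\ell}^\tau)^{1/\tau}}.
$$
The product on the right is a finite constant $C_{d,\tau}$ depending only on $d$ and $\tau$ (not on $n$), because each factor is a fixed number and there are finitely many of them. Hence $e^{\rm wor-all}(n,H_d)\le C_{d,\tau}^{1/2}\,(n+1)^{-1/(2\tau)}$, so $\lim_{n\to\infty}e^{\rm wor-all}(n,H_d)\,n^{1/(2\tau)-\delta}=0$ for any $\delta>0$. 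Since $\tau>0$ is arbitrary, $1/(2\tau)$ can be made arbitrarily large, and therefore $r^{\rm wor-all}(H_d)=\infty$ by the definition in~\eqref{rate}.

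For the class $\lstd$, the cleanest route is to quote the general result (referenced earlier in the excerpt, e.g.~as part of the $\lall$-vs-$\lstd$ comparison machinery) relating convergence rates for $\lstd$ to those for $\lall$: for the worst case setting on Hilbert spaces, if $\sum_j \lambda_{d,j}^\tau<\infty$ for some $\tau<1$ then $e^{\rm wor-std}(n,H_d)$ inherits essentially the same polynomial rate as $e^{\rm wor-all}(n,H_d)$ up to an arbitrarily small loss in the exponent — and here, by~\eqref{sumpoweig}, $\sum_j\lambda_{d,j}^\tau<\infty$ for \emph{all} $\tau>0$, so there is no obstruction. Concretely, one shows $e^{\rm wor-std}(n,H_d)$ is bounded above by a $d$-dependent constant times $n^{-1/(2\tau)}$ for each $\tau>0$, which again forces $r^{\rm wor-std}(H_d)=\infty$. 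The main obstacle is exactly this last point: $\lstd$ is genuinely harder than $\lall$, so I must be careful to cite the correct reduction theorem (the one that converts a summability condition on the eigenvalues into a function-values rate with only an $\e$-loss in the exponent) rather than reprove it; alternatively, since $H_d$ is a tensor product of univariate spaces whose eigenfunctions are explicit Hermite functions, one could build an explicit product-type sampling algorithm, but invoking the general result is far shorter. Either way, the conclusion $r^{\rm wor-all}(H_d)=r^{\rm wor-std}(H_d)=\infty$ follows, noting that finiteness of $d$ is essential — the constants blow up with $d$, which is precisely the phenomenon the later theorems quantify.
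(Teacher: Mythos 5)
Your proposal is correct and follows essentially the same route as the paper: the $\lall$ part is verbatim the paper's argument via Lemma \ref{lem1}, and for $\lstd$ the reduction theorem you gesture at is exactly Theorem 5 of \cite{KWW09}, which the paper invokes. Note only that that theorem converts an eigenvalue decay $\lambda_{d,n}\le B\,n^{-p}$ (for $p>1$) into an $\lstd$ error rate $(1-\delta)\,p^2/(2p+2)$ rather than ``the same rate up to an arbitrarily small loss in the exponent'' --- a genuine loss for fixed $p$, but immaterial here since $p$ may be taken arbitrarily large.
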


\begin{proof}
For the class $\lall$ we know that
$e^{\rm wor-all}(n,H_d)=\sqrt{\lambda_{d,n+1}}$,
where $\lambda_{d,n+1}$ is the $(n+1)^{\text{st}}$ largest eigenvalue
of $W_d$. Lemma \ref{lem1} demonstrates that $\lambda_{d,n+1}$ is proportional to $(n+1)^{-1/\tau}$ times a dimension dependent constant.
This implies that
$r^{\rm wor-all}(H_d)\ge 1/(2\tau)$
and since $\tau$ can be arbitrarily small, we conclude that
$$
r^{\rm wor-all}(H_d)=\infty,
$$
as claimed.

Consider now the class $\lstd$.  We use Theorem 5
from~\cite{KWW09}, which states that if there exist numbers
$p>1$ and $B$ such that
\begin{equation}\label{assumption1}
\lambda_{d,n}\le B\,n^{-p}\ \ \ \mbox{for all}\ \ \ n\in\naturals
\end{equation}
then for all $\delta\in(0,1)$ and $n\in\naturals$ there exists a linear
algorithm $A_n$ that  uses at most~$n$ function values and its worst
case error is bounded by
$$
e^{\rm wor}(A_n)\le B\,C_{\delta,p}\,(n+1)^{-(1-\delta)\,p^2/(2p+2)}.
$$
Here, $C_{\delta,p}$ is independent of $n$ and $d$ and depends only on
$\delta$ and $p$.

Note that assumption~\eqref{assumption1} holds in our case for an
arbitrarily large $p$ with $B$ that can depend on $d$. Hence,
$r^{\rm wor-std}(H_d)\ge (1-\delta)\,p^2/(2p+2)$, and since
$\delta$ can be arbitrarily small and $p$ can be arbitrarily
large we conclude
$$
r^{\rm wor-std}(H_d)=\infty,
$$
as claimed. This completes the proof.
\end{proof}

We stress that the algorithm $A_n$ that was used in the proof
is non-constructive. However, there are known algorithms that
use only function values and whose worst case error goes to zero
like $n^{-p}$ for an arbitrary large $p$. In fact,
given a design, it is known that the spline algorithm is
the best way to use the function data given via that design.
Thus, the search for an algorithm with optimal convergence rates
focuses on the choice of a good design.  One such design was
proposed by Smolyak already in 1963, see \cite{Sm63}, and today
it is usually referred to as a sparse grid, see \cite{BunGrie04a}
for a survey.
An associated algorithm from which this design naturally arises
is Smolyak's algorithm. The essence of this algorithm
is to use a certain tensor product of univariate algorithms. Then,
if the univariate algorithm has the worst case error
of order $n^{-p}$, the worst case error for the $d$-variate
case is also of order $n^{-p}$ modulo some powers of $\ln\,n$, see
e.g.,~\cite{WW95}.

Theorem~\ref{thm1}
states that as long as only the rate of convergence is considered,
the function approximation problem for Hilbert spaces with Gaussian
kernels is easy. In fact, it is not surprising since functions of this
class are very smooth.
However, the rate of convergence tells us
nothing about the dependence on $d$. As long as $d$ is small the
dependence on $d$ is irrelevant. But if $d$ is large we want to check
the dependence on~$d$. We are especially afraid of an exponential
dependence on $d$ which is called after Bellman the~\emph{curse of
dimensionality}. It also may happen that we have a tradeoff
between the rate of convergence and dependence on $d$.
Furthermore, the results may now depend on the weights $\gamma_\ell$.
This is the subject of our next sections.

\section{Tractability for the Absolute Error Criterion} \label{abserrsec}

As in the previous section we consider the function approximation
problem for Hilbert spaces  $H_d=H(K_d)$ with a Gaussian kernel.
We now consider the absolute error criterion and we want to verify
whether polynomial tractability holds. Let us recall that
we study the minimal number of functionals from the class
$\lall$ or $\lstd$ needed to guarantee a worst case error of at most $\e$,
 $$
n^{\rm wor-abs-\stdall}(\e,H_d)=\min\left\{\,n\ |\ \
e^{\rm wor-\stdall}(n,H_d)\le \e\right\}, \qquad \stdall \in \{{\rm std},{\rm all}\}.
$$

\subsection{Arbitrary Linear Functionals}\quad

We first analyze the class $\lall$ and  polynomial
tractability.
\begin{theorem}\label{thm2}
Consider the function approximation problem $\mathcal{I}=\{I_d\}_{d \in \naturals}$
for Hilbert spaces with isotropic
or anisotropic Gaussian kernels with arbitrary positive $\gamma_\ell$
for the class $\lall$ and
the absolute error criterion. Then
\begin{itemize}
\item $\mathcal{I}$
is strongly polynomially tractable
with exponent of strong polynomial tractability at most $2$.
For all $d\in\naturals$ and $\e\in(0,1)$ we have
\begin{eqnarray*}
e^{\rm wor-all}(n,H_d)&\le&(n+1)^{-1/2},\\
n^{\rm wor-abs-all}(\e,H_d)&\le&\e^{-2}.
\end{eqnarray*}
\item
For the isotropic Gaussian kernel the exponent of
strong tractability is $2$, so that the bound above is best possible
in terms of the exponent of $\e^{-1}$. Furthermore
strong polynomial tractability is equivalent to polynomial
tractability.
\end{itemize}
\end{theorem}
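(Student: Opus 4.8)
For the first bullet I would simply invoke Lemma~\ref{lem1}. For the class $\lall$ one has $e^{\rm wor-all}(n,H_d)=\sqrt{\lambda_{d,n+1}}$, and \eqref{nplusfirst} with $\tau=1$ gives, for every $\bgamma$ and every $d$, $\lambda_{d,n+1}\le(n+1)^{-1}\prod_{\ell=1}^{d}\frac{1-\omega_{\gamma_\ell}}{1-\omega_{\gamma_\ell}}=(n+1)^{-1}$, hence $e^{\rm wor-all}(n,H_d)\le(n+1)^{-1/2}$. Then any integer $n\ge\e^{-2}-1$ achieves worst case error at most $\e$, and since $\lceil\e^{-2}-1\rceil\le\e^{-2}$ this yields $n^{\rm wor-abs-all}(\e,H_d)\le\e^{-2}$ and strong polynomial tractability with exponent at most $2$. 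Note that the equivalence of polynomial and strong polynomial tractability claimed in the second bullet is then trivial in one direction, so the real content left is to show that for the isotropic kernel the exponent is no smaller than $2$ — indeed, that no polynomial-tractability bound can beat exponent $2$ in $\e^{-1}$.

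For that, I would work directly with the explicit eigenvalues. Take $\gamma_\ell\equiv\gamma$ and write $\omega:=\omega_\gamma\in(0,1)$. Since $e^{\rm wor-all}(n,H_d)=\sqrt{\lambda_{d,n+1}}$, the number $n^{\rm wor-abs-all}(\e,H_d)$ is the count of eigenvalues exceeding $\e^{2}$; by \eqref{tlambarbd}, $\tlambda_{d,\bgamma,\bj}=(1-\omega)^{d}\omega^{\,|\bj|-d}$ with $|\bj|=\sum_{\ell}j_\ell$, so the value $(1-\omega)^{d}\omega^{s}$ has multiplicity $\binom{s+d-1}{d-1}$. Fix $p<2$. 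The plan is, for each large $d$, to choose an integer $S_d$ with $S_d/d\to\alpha^{*}:=\omega/(1-\omega)$ and set $\e_d^{2}=(1-\omega)^{d}\omega^{S_d}$, so that $\e_d\to0$ and exactly the eigenvalues with $s<S_d$ exceed $\e_d^{2}$; the hockey-stick identity then gives
\[
n^{\rm wor-abs-all}(\e_d,H_d)=\sum_{s=0}^{S_d-1}\binom{s+d-1}{d-1}=\binom{S_d+d-1}{d}.
\]
Stirling's formula gives $\tfrac1d\ln\binom{S_d+d-1}{d}\to g(\alpha^{*})$ with $g(\alpha)=(\alpha+1)\ln(\alpha+1)-\alpha\ln\alpha$, while $\tfrac1d\ln\e_d^{-2}=\ln\tfrac1{1-\omega}+\tfrac{S_d}{d}\ln\tfrac1\omega\to a+\alpha^{*}b$, $a=\ln\tfrac1{1-\omega}$, $b=\ln\tfrac1\omega$. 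The key step is the elementary identity
\[
\max_{\alpha>0}\bigl(g(\alpha)-a-\alpha b\bigr)=g(\alpha^{*})-a-\alpha^{*}b=0 ,
\]
which holds because $\alpha\mapsto g(\alpha)-a-\alpha b$ is strictly concave with derivative $\ln(1+1/\alpha)-b$ vanishing exactly at $\alpha^{*}$, and the value there is $0$ by a one-line computation with $\alpha^{*}+1=1/(1-\omega)$.

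Setting $\kappa:=g(\alpha^{*})=a+\alpha^{*}b>0$ (positive since $g$ is increasing, $g(0^{+})=0$, $\alpha^{*}>0$), this gives $n^{\rm wor-abs-all}(\e_d,H_d)=\Theta\bigl(d^{-1/2}e^{\kappa d}\bigr)=\Theta\bigl(d^{-1/2}\e_d^{-2}\bigr)$, so for every $q\ge0$, $n^{\rm wor-abs-all}(\e_d,H_d)/(d^{\,q}\e_d^{-p})=\Theta\bigl(d^{-q-1/2}e^{\kappa(1-p/2)d}\bigr)\to\infty$ whenever $p<2$. Hence no bound $n^{\rm wor-abs-all}(\e,H_d)\le Cd^{\,q}\e^{-p}$ with $p<2$ can hold; with the first bullet this pins the exponent of strong polynomial tractability at exactly $2$ and shows polynomial tractability cannot improve it, so the two notions coincide here. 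I expect the main obstacle to be precisely this lower-bound half — carrying the Stirling estimate of $\binom{S_d+d-1}{d}$ with enough precision (the $d^{-1/2}$ factor is harmless) to match its exponential rate with $\kappa$, keeping $\omega\in(0,1)$ so that $\alpha^{*},\kappa>0$, and absorbing the integer rounding of $S_d$ into lower-order terms. As an alternative to the explicit construction, one may instead invoke the known eigenvalue characterization for the class $\lall$ with the absolute error (see~\cite{NW08}), namely that the exponent of strong polynomial tractability equals $2\inf\{\tau>0:\sup_d\sum_j\lambda_{d,j}^{\tau}<\infty\}$; by \eqref{sumpoweig} this sum is $\bigl[(1-\omega)^{\tau}/(1-\omega^{\tau})\bigr]^{d}$ in the isotropic case, which stays bounded in $d$ exactly when $(1-\omega)^{\tau}+\omega^{\tau}\le1$, i.e.\ when $\tau\ge1$ — once more giving exponent $2$.
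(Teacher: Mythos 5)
Your proposal is correct, and for the substantive half of the theorem --- the lower bound showing that the exponent cannot be pushed below $2$ in the isotropic case --- it takes a genuinely different route from the paper. The upper-bound half is essentially the paper's: both rest on Lemma \ref{lem1} with $\tau=1$ and the trace identity $\sum_j\lambda_{d,j}=1$; the paper routes $n^{\rm wor-abs-all}(\e,H_d)\le\e^{-2}$ through Theorem 5.1 of \cite{NW08} (letting $C_1\to0$), whereas you read it off directly from $\lambda_{d,n+1}\le(n+1)^{-1}$, which is if anything cleaner. For the lower bound the paper again invokes the eigenvalue criteria of \cite{NW08}: for $\tau<1$ the sum $\sum_{j\ge\lceil C_1d^{q_1}\rceil}\lambda_{d,j}^\tau$ grows like $\bigl((1-\omega_\gamma)^\tau/(1-\omega_\gamma^\tau)\bigr)^d$ with base $>1$, so no factor $d^{-q_2\tau}$ can keep it bounded and $\tau\ge1$ is forced. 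You instead count eigenvalues above a threshold explicitly, using the multiplicity $\binom{s+d-1}{d-1}$ of the level $(1-\omega)^d\omega^{s}$, the hockey-stick identity, and Stirling, and you exhibit a concrete sequence $\e_d\to0$ along which $n^{\rm wor-abs-all}(\e_d,H_d)=\Theta\bigl(d^{-1/2}\e_d^{-2}\bigr)$; the identity $g(\alpha^*)=a+\alpha^*b$ with $\alpha^*+1=1/(1-\omega)$ checks out, and the conclusion that $n^{\rm wor-abs-all}(\e_d,H_d)/(d^{\,q}\e_d^{-p})\to\infty$ for every $p<2$ and $q\ge0$ follows, which settles both the exponent and the equivalence with polynomial tractability in one stroke. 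What each approach buys: the paper's argument is shorter because the heavy lifting is outsourced to \cite{NW08} and it transfers verbatim to the anisotropic setting of Theorem \ref{thm3}; yours is self-contained, avoids the tractability characterization theorems entirely, and produces an explicit witness sequence $(\e_d,d)$ saturating the bound $\e^{-2}$, which is more information than the paper extracts. Your closing ``alternative'' via the summability of $\sum_j\lambda_{d,j}^\tau$ is precisely the paper's argument, with the minor caveat that the criterion in \cite{NW08} involves the tail sum from $\lceil C_1\rceil$ rather than the full sum --- immaterial here, since the finitely many subtracted terms are each at most $\lambda_{d,1}^\tau=(1-\omega)^{\tau d}\to0$ and cannot rescue the exponential blow-up, exactly as the paper notes.
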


\begin{proof}
We use Theorem 5.1 from~\cite{NW08}. This theorem says that
$\mathcal{I}$ is strongly polynomially tractable iff
there exist two positive numbers $C_1$ and $\tau$ such that
$$
C_2:=\sup_{d\in\naturals}
\left(\sum_{j=\lceil
    C_1\rceil}^\infty\lambda^\tau_{d,j}\right)^{1/\tau}<\infty.
$$
If so, then
$$
n^{\rm wor-abs-all}(\e,H_d)\le (C_1+C_2^\tau)\,\e^{-2\tau}
\ \ \
\mbox{for all}\ \ \ d\in\naturals \ \ \mbox{and}\ \ \e\in(0,1).
$$
Furthermore, the exponent of strong polynomial tractability is
$$
p^{\rm all}=\inf\{2\tau\,|\ \ \mbox{$\tau$\ for which
$C_2<\infty$}\}.
$$
Let $\tau=1$. Then, by \eqref{sumeigone} it follows that no matter what the weights $\gamma_\ell$ are, we can
take an arbitrarily small $C_1$ so that $\lceil C_1\rceil=1$ and
$C_2=1$ as well as
$n^{\rm wor-abs-all}(\e,H_d)\le (C_1+1)\,\e^{-2}$.
For $C_1$ tending to zero, we conclude the bound
$$
n^{\rm wor-abs-all}(\e,H_d)\le \,\e^{-2}.
$$
Furthermore, by \eqref{nplusfirst} in Lemma \ref{lem1} it follows that
$$
e^{\rm wor-all}(n,H_d)=\sqrt{\lambda_{d,n+1}}\le (n+1)^{-1/2},
$$
as claimed.

Assume now the isotropic case, i.e., $\gamma_\ell=\gamma$
for all $j\in\naturals$.  Then for any positive $C_1$ and $\tau$ we
use Lemma 1 and obtain
\begin{eqnarray*}
\sum_{j=\lceil C_1\rceil}^\infty\lambda_{d,j}^\tau&=&
\sum_{j=1}^\infty\lambda_{d,j}^\tau-
\sum_{j=1}^{\lceil C_1\rceil-1}\lambda_{d,j}^\tau
\\
&=&\left(\frac{(1-\omega_\gamma)^\tau}{1-\omega_\gamma^\tau}\right)^d-
\sum_{j=1}^{\lceil C_1\rceil-1}\lambda_{d,j}^\tau\\
&\ge&
\left(\frac{(1-\omega_\gamma)^\tau}{1-\omega_\gamma^\tau}\right)^d-
\left(\lceil C_1\rceil-1\right)\lambda_{d,1}^\tau\\
&=&
\left(\frac{(1-\omega_\gamma)^\tau}{1-\omega_\gamma^\tau}\right)^d-
\left(\lceil C_1\rceil-1\right)(1-\omega_\gamma)^{\tau\,d}.
\end{eqnarray*}

For $\tau\in(0,1)$, we know from Lemma \ref{lem1}
that $(1-\omega_\gamma)^\tau/(1-\omega_\gamma^\tau)>1$, and therefore
the last expression goes exponentially fast to infinity with $d$.
This proves that $C_2=\infty$ for all $\tau\in(0,1)$.
Hence, the exponent of strong tractability is two.

Finally, to prove that strong polynomial tractability is equivalent
to polynomial tractability, it is enough to show that
polynomial tractability implies strong polynomial tractability.
From Theorem 5.1 of~\cite{NW08} we know that
polynomial tractability holds iff
there exist numbers $C_1>0$, $q_1\ge0$, $q_2\ge0$ and $\tau>0$
such that
$$
C_2:=\sup_{d\in\naturals}\left\{d^{-q_2}
\left(\sum_{j=\lceil
    C_1\,d^{\,q_1}\rceil}^\infty\lambda^\tau_{d,j}\right)^{1/\tau}
\right\}<\infty.
$$
If so, then
$$
n^{\rm wor-abs-all}(\e,H_d)\le(C_1+C_2^\tau)\,d^{\,\max(q_1,q_2\tau)}\,
\e^{-2\tau}
$$
for all $\e\in(0,1)$ and $d\in\naturals$. Note that for all $d$ we
have
$$
d^{-q_2\tau}\left(\frac{(1-\omega_\gamma)^\tau}{1-\omega_\gamma^\tau}\right)^d
-d^{-q_2\tau}
\left(\lceil C_1\rceil-1\right)(1-\omega_\gamma)^{\tau\,d}
\le C_2^\tau<\infty.
$$

This implies that $\tau\ge1$. On the other hand, for $\tau=1$ we can
take $q_1=q_2=0$ and arbitrarily small $C_1$, and
obtain strong tractability.
This completes the proof.
\end{proof}

We now compare Theorems~\ref{thm1} and~\ref{thm2}. Theorem~\ref{thm1} says
that for any $p$ we have
$$
e^{\rm wor-all}(n,H_d)=\mathcal{O}(n^{-p})
$$
but the factor in the big $\mathcal{O}$ notation may depend on $d$.
In fact, from Theorem~\ref{thm2} we conclude that, indeed, for the isotropic
case it depends
more than polynomially on $d$ for all $p>1/2$. Hence, the good rate of
convergence does not necessarily mean much for large $d$.

The exponent of strong polynomial tractability is $2$ for the isotropic
case. We now check how the exponent of strong polynomial tractability
depends on the sequence
$\bgamma=\{\gamma_\ell\}_{\ell \in \naturals}$ of shape parameters.
The determining factor is the quantity $r(\bgamma)$ introduced
in \eqref{wgammaform},  which measures the rate of decay of
the shape parameter sequence.

\begin{theorem}\label{thm3}
Consider the function approximation problem $\mathcal{I}=\{I_d\}_{d \in \naturals}$
for Hilbert spaces with isotropic or anisotropic
Gaussian kernels for the class $\lall$ and
the absolute error criterion. Let $r(\bgamma)$ be the rate of
decay of shape parameters. Then
\begin{itemize}
\item $\mathcal{I}$
is strongly polynomially tractable
with exponent
$$
p^{\rm all}= \min\left(2,\frac1{r(\bgamma)}\right)\le2.
$$
\item
For all $d\in\naturals$, $\e\in(0,1)$ and $\delta\in(0,1)$ we have
\begin{eqnarray*}
e^{\rm wor-all}(n,H_d)&=&\mathcal{O}\left(n^{-1/p^{\rm all}+\delta}
\right) = \mathcal{O}\left(n^{-\max(r(\bgamma),1/2)+\delta}
\right),\\
n^{\rm wor-abs-all}(\e,H_d)&=&
\mathcal{O}\left(\e^{-(p^{\rm all}+\delta)}\right),
\end{eqnarray*}
where the factors in the big $\mathcal{O}$ notation do not depend on
$d$ and $\e^{-1}$ but may depend on $\delta$.
\item
Furthermore, in the case of ordered shape parameters, i.e., $\gamma_1 \ge \gamma_2 \ge \cdots$ if
$$
n^{\rm wor-abs-all}(\e,H_d)=\mathcal{O}\left(\e^{-p}\,d^{\,q}\right)
\ \ \ \mbox{for all}\ \ \e\in(0,1)\ \mbox{and}\ d\in\naturals,
$$
then $p\ge p^{\rm all}$,
which means that strong polynomial tractability is
equivalent to polynomial tractability.
\end{itemize}
\end{theorem}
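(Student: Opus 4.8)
The plan is to rerun the argument of Theorem~\ref{thm2}, applying Theorem~5.1 of~\cite{NW08}, but now pinning down the exact set of exponents $\tau$ for which the sums of $\tau$th powers of the eigenvalues stay bounded in $d$. Everything reduces to the single-variable factor in the product of Lemma~\ref{lem1}. From \eqref{omegadef} one gets $c_0\gamma^2\le\omega_\gamma\le\gamma^2$ for $0<\gamma\le\gamma_1$, and, since $\frac{(1-\omega)^\tau}{1-\omega^\tau}=1+\omega^\tau+\mathcal{O}(\omega)$ near $\omega=0$ and is positive, continuous and (for $\tau\in(0,1)$) $>1$ on $(0,1)$, one obtains, for fixed $\tau\in(0,1)$,
$$
1+c\,\gamma^{2\tau}\ \le\ \frac{(1-\omega_\gamma)^\tau}{1-\omega_\gamma^\tau}\ \le\ 1+C\,\gamma^{2\tau}\qquad(0<\gamma\le\gamma_1),
$$
with $c,C>0$ depending only on $\tau$ and $\gamma_1$; for $\tau\ge1$ this factor is $\le1$, by the convexity computation in the proof of Lemma~\ref{lem1}. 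Hence, by Lemma~\ref{lem1}, $\sum_{j=1}^\infty\lambda_{d,j}^\tau=\prod_{\ell=1}^d\frac{(1-\omega_{\gamma_\ell})^\tau}{1-\omega_{\gamma_\ell}^\tau}$ is bounded uniformly in $d$ exactly when $\tau\ge1$, or when $\tau<1$ and $\sum_{\ell}\gamma_\ell^{2\tau}<\infty$; by \eqref{wgammaform} the latter holds iff $\tau>1/(2r(\bgamma))$. So the admissible $\tau$ form an interval with left endpoint $\tfrac12p^{\rm all}=\tfrac12\min(2,1/r(\bgamma))$.

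The first two bullets then follow mechanically. For strong polynomial tractability I take $C_1\downarrow0$, so $\lceil C_1\rceil=1$ and the sum in the criterion of~\cite{NW08} is $\sum_{j\ge1}\lambda_{d,j}^\tau$; for any admissible $\tau$ this gives strong polynomial tractability with exponent $\le2\tau$ and $n^{\rm wor-abs-all}(\e,H_d)\le(C_1+C_2^\tau)\e^{-2\tau}$, and letting $\tau\downarrow\tfrac12p^{\rm all}$ yields exponent $\le p^{\rm all}$ and the $\mathcal{O}(\e^{-(p^{\rm all}+\delta)})$ bound. For $\tau<\tfrac12p^{\rm all}$ one has $\sum_\ell\gamma_\ell^{2\tau}=\infty$, so even $\sum_{j\ge\lceil C_1\rceil}\lambda_{d,j}^\tau\ge\prod_\ell\frac{(1-\omega_{\gamma_\ell})^\tau}{1-\omega_{\gamma_\ell}^\tau}-\lceil C_1\rceil$ blows up with $d$; hence the exponent of strong polynomial tractability is exactly $p^{\rm all}$. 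For the rate I feed the same $\tau$ into \eqref{nplusfirst}:
$$
e^{\rm wor-all}(n,H_d)=\sqrt{\lambda_{d,n+1}}\ \le\ \Bigl(\sum_{j}\lambda_{d,j}^\tau\Bigr)^{1/(2\tau)}(n+1)^{-1/(2\tau)},
$$
with a $d$-independent prefactor; $\tau\downarrow\tfrac12p^{\rm all}$ gives $\mathcal{O}(n^{-1/p^{\rm all}+\delta})=\mathcal{O}(n^{-\max(r(\bgamma),1/2)+\delta})$, and for $\tau=1$ the prefactor is $1$ by \eqref{sumeigone}, recovering $(n+1)^{-1/2}$.

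For the third bullet I would argue by contradiction. Assume $n^{\rm wor-abs-all}(\e,H_d)=\mathcal{O}(\e^{-p}d^{q})$ with $p<p^{\rm all}$. By the polynomial-tractability criterion of Theorem~5.1 of~\cite{NW08} (recalled in the proof of Theorem~\ref{thm2}) there are then $C_1>0$, $q_1,q_2\ge0$ and $\tau<\tfrac12p^{\rm all}$, hence $\tau<1$ and $\tau<1/(2r(\bgamma))$ \emph{strictly}, with $\sup_d d^{-q_2}\bigl(\sum_{j\ge\lceil C_1 d^{q_1}\rceil}\lambda_{d,j}^\tau\bigr)^{1/\tau}<\infty$. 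Discarding the top $\lceil C_1 d^{q_1}\rceil-1$ eigenvalues (each at most $\lambda_{d,1}^\tau\le1$) and using the lower estimate above,
$$
\sum_{j\ge\lceil C_1 d^{q_1}\rceil}\lambda_{d,j}^\tau\ \ge\ \exp\!\Bigl(\tilde{c}\sum_{\ell=1}^d\gamma_\ell^{2\tau}\Bigr)-C_1 d^{q_1}-1
$$
for some $\tilde{c}>0$. The key remaining point, which uses the strict inequality together with the monotonicity of $\bgamma$, is that $S_d:=\sum_{\ell=1}^d\gamma_\ell^{2\tau}$ cannot be $O(\ln d)$: if $S_d\le A\ln d$ for all $d$, then $d\gamma_d^{2\tau}\le S_d\le A\ln d$ gives $\gamma_d^{s}=\mathcal{O}\bigl((\ln d)^{s/(2\tau)}d^{-s/(2\tau)}\bigr)$ and hence $\sum_\ell\gamma_\ell^{s}<\infty$ for \emph{every} $s>2\tau$, which forces $1/r(\bgamma)\le2\tau$ and contradicts $2\tau<1/r(\bgamma)$. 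Thus $S_{d_m}/\ln d_m\to\infty$ along a subsequence $d_m\to\infty$, so $\exp(\tilde{c}\,S_{d_m})$ outgrows every power of $d_m$, dominates the correction $C_1 d_m^{q_1}+1$, and makes $d_m^{-q_2}\bigl(\sum_{j\ge\lceil C_1 d_m^{q_1}\rceil}\lambda_{d_m,j}^\tau\bigr)^{1/\tau}\to\infty$ — contradicting the assumed finiteness of that supremum. Hence $p\ge p^{\rm all}$, and since strong polynomial tractability already attains this exponent, polynomial and strong polynomial tractability coincide.

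I expect this last growth step to be the real obstacle: it is essential to exploit that $p<p^{\rm all}$ is strict (so that $2\tau$ lies strictly below $1/r(\bgamma)$) together with $\gamma_1\ge\gamma_2\ge\cdots$; knowing only $\sum_\ell\gamma_\ell^{2\tau}=\infty$ is not enough, since then $S_d$ could grow as slowly as $\ln\ln d$ and would fail to beat the polynomial factors $d^{q_1}$ and $d^{q_2\tau}$.
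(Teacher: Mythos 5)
Your proposal is correct and follows essentially the same route as the paper: Theorem~5.1 of~\cite{NW08} reduces everything to the boundedness in $d$ of $\prod_{\ell\le d}(1-\omega_{\gamma_\ell})^\tau/(1-\omega_{\gamma_\ell}^\tau)$, which via $\omega_\gamma\asymp\gamma^2$ is equivalent to $\tau\ge1$ or $\sum_\ell\gamma_\ell^{2\tau}<\infty$, and for the ordered case both you and the paper use $d\,\gamma_d^{2\tau}\le\sum_{\ell\le d}\gamma_\ell^{2\tau}=\mathcal{O}(\ln d)$ to force $r(\bgamma)\ge1/(2\tau)$. Your contradiction phrasing of the last bullet versus the paper's direct derivation is only a cosmetic difference.
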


\begin{proof}  As in the proof of Theorem \ref{thm2},
$\mathcal{I}$ is strongly polynomially tractable iff
there exist two positive numbers $C_1$ and $\tau$ such that
$$
C_2:=\sup_{d\in\naturals}
\left(\sum_{j=\lceil
    C_1\rceil}^\infty\lambda^\tau_{d,j}\right)^{1/\tau}<\infty.
$$
Furthermore, the exponent $p^{\rm all}$ of strong polynomial
tractability is the infimum of $2\tau$ for which this
condition holds. Proceeding similarly as before, we have
$$
\sum_{j=\lceil
    C_1\rceil}^\infty\lambda^\tau_{d,j}\le
\sum_{j=1}^\infty\lambda^\tau_{d,j}=\prod_{\ell=1}^\infty
\frac{(1-\omega_{\gamma_\ell})^\tau}{1-\omega_{\gamma_\ell}^\tau}
$$
and since $\lambda_{d,j}<1$
$$
\sum_{j=\lceil
    C_1\rceil}^\infty\lambda^\tau_{d,j}\ge
\sum_{j=1}^\infty\lambda^\tau_{d,j}-C_1=
\prod_{\ell=1}^\infty\frac{(1-\omega_{\gamma_\ell})^\tau}
{1-\omega_{\gamma_\ell}^\tau}-C_1.
$$
Therefore, $\mathcal{I}$ is strongly polynomially tractable iff
there exists a positive $\tau$ such that
$$
C_3:=\prod_{\ell=1}^\infty\frac{1-\omega_{\gamma_\ell}}
{(1-\omega_{\gamma_\ell}^\tau)^{1/\tau}}<\infty,
$$
and the exponent $p^{\rm all}$ is the infimum of $2\tau$ for which the
last condition holds.

As we already know, this holds for
$\tau=1$. Take now $\tau\in(0,1)$.
Since
$(1-\omega_{\gamma_\ell})/(1-\omega_{\gamma_\ell}^\tau)^{1/\tau}>1$
then $C_3<\infty$ implies that
$$
\lim_{\ell\to\infty}\frac{1-\omega_{\gamma_\ell}}
{(1-\omega_{\gamma_\ell}^\tau)^{1/\tau}}=1.
$$
Taking into account \eqref{omegadef},
it is easy to check that the last condition is equivalent to
$$
\lim_{\ell\to\infty}\omega_{\gamma_\ell}=\lim_{\ell\to\infty}\gamma_\ell^2=0.
$$
Furthermore, $C_3<\infty$ implies that
$$
\sum_{\ell=1}^\infty\gamma_\ell^{2\tau}<\infty,
$$
and $r(\bgamma)\ge 1/(2\tau)>1/2$. Hence, $p^{\rm all}<2$ only if
$r(\gamma)>1/2$. On the other hand, $2\tau\ge 1/r(\bgamma)$ and
therefore $p^{\rm all}\ge1/r(\bgamma)$. This
establishes the formula for $p^{\rm all}$.
The estimates on $e^{\rm wor-all}(n,H_d)$ and
$n^{\rm wor-abs-all}(\e,H_d)$ follow from the definition of strong
tractability.

Assume now polynomial tractability with $p<2$ and an arbitrary $q$.
Then $\lambda_{d,n+1}\le\e^2$ for $n=\mathcal{O}(\e^{-p}d^q)$. Hence,
$$
\lambda_{d,n+1}=\mathcal{O}(d^{\,2q/p}(n+1)^{-2/p}).
$$
This implies
$$
\prod_{j=1}^d\frac{(1-\omega_{\gamma_\ell})^\tau}
{1-\omega_{\gamma_\ell}^\tau}=
\sum_{\ell=1}^\infty\lambda_{d,\ell}^\tau=\mathcal{O}(d^{\,2q\tau/p})\ \ \
\mbox{for all}\ \ \ 2\tau>p.
$$
For $\tau<1$, this yields
$$
\exp\left(\sum_{\ell=1}^d\gamma_\ell^{2\tau}\right)=\mathcal{O}(d^{\,2q\tau/p}).
$$
Therefore
$$
\limsup_{\ell\to\infty}\frac{\sum_{\ell=1}^d\gamma_\ell^{2\tau}}{\ln\,d}<\infty.
$$
Since the $\gamma_\ell$'s are ordered, we have
$$
\frac{d\gamma_{d}^{2\tau}}{\ln\,d}\le \frac{\sum_{\ell=1}^d
\gamma_{\ell}^{2\tau}}{\ln\,d}=\mathcal{O}(1),
$$
and $\gamma_d=\mathcal{O}((\ln(d)/d)^{1/(2\tau)})$. Hence,
$r(\bgamma)\ge 1/(2\tau)$ and $r(\bgamma)\ge1/p$. This means that
$2>p\ge1/r(\bgamma)=p^{\rm all}$, as claimed.
\end{proof}

It is interesting to notice that the last part of Theorem 3 does not
hold, in general, for unordered shape parameters. Indeed, for $s>1/2$,
take
\begin{eqnarray*}
\gamma_{a_k}&=&1 \ \ \ \mbox{for all natural $k$ with}\ \ a_k=2^{2^k},\\
\gamma_{\ell}&=&\frac1{\ell^s}\ \ \ \mbox{for all natural
$\ell$ not equal to $a_k$}.
\end{eqnarray*}
Then strong polynomial tractability holds with the exponent $2$ since
$C_3=\infty$ in the proof of Theorem 3 for all $\tau<1$. On the other
hand, we have polynomial tractability with $p=1/s<2$ and
$q$ arbitrarily close to $1/(2s)$. Indeed, for $\tau=1/(2s)$ and
$q_1=0$ and $q_2>1$ we have
\begin{eqnarray*}
d^{-q_2}\sum_{\ell=1}^\infty\lambda_{d,\ell}^\tau&=&d^{-q_2}\prod_{\ell}^d
\frac{(1-\omega_{\gamma_\ell})^\tau}{1-\omega_{\gamma_\ell}}\\
&=&d^{-q_2}\left(\frac{1-\omega_1)^\tau}{1-\omega_1}\right)^{\mathcal{O}(1)+
\ln\,\ln\, d}\,\mathcal{O}(d)<\infty.
\end{eqnarray*}
This implies that
$$
n^{\rm wor-abs-all}(\e,H_d)=\mathcal{O}\left(d^{\,q_2/(2s)}\,\e^{-1/s}\right).
$$
\vskip 1pc
Theorem 3 states that the exponent of strong polynomial tractability is $2$
for all shape parameters for which $r(\bgamma)\le 1/2$. Only if
$r(\bgamma)>1/2$ is the exponent smaller than~$2$.
Again, although the rate of convergence of
$e^{\rm wor-all}(n,H_d)$ is always excellent, the
dependence on $d$ is eliminated only at the expense of the exponent
which must be roughly  $1/p^{\rm all}$.  Of course, if we take an
exponentially decaying sequence of shape parameters, say,
$\gamma_\ell=q^{\,\ell}$ for some
$q\in(0,1)$, then $r(\bgamma)=\infty$ and $p^{\rm all}=0$. In this
case, we have an excellent rate of convergence without any dependence on $d$.

Although Theorem~\ref{thm2} is for Gaussian kernels, it is easy to
extend this theorem for other positive definite translation invariant
or radially symmetric kernels.
Indeed, for translation invariant kernels the only difference is that
for $\tau=1$ the sum of the eigenvalues is not necessarily one but
$$
\sum_{j=1}^\infty\lambda_{d,j}=\tK_d(\bzero).
$$
Hence, for all $\e\in(0,1)$ and $d\in\naturals$ we have
$$
e^{\rm wor-all}(n,H_d)\le\left[\frac{\tK_d(\bzero)}{n+1}\right]^{1/2}\ \
\mbox{and}\ \
n^{\rm wor-abs-all}(n,H_d)\le \tK_d(\bzero)\,\e^{-2}.
$$
Tractability then depends on how $\tK_d(\bzero)$ depends on $d$. In
particular, it is easy to check the following facts.
\begin{itemize}
\item If
$$
\sup_{d\in\naturals}\tK_d(\bzero)<\infty
$$
then we have
strong polynomial tractability with exponent at
most $2$, i.e.,
$$
n^{\rm wor-all}(n,H_d)=\mathcal{O}\left(\e^{-2}\right).
$$
\item If there exists a nonnegative $q$ such that
$$
\sup_{d\in\naturals}\tK_d(\bzero)\,d^{-q}<\infty
$$
then we have polynomial tractability and
$$
n^{\rm wor-all}(n,H_d)=\mathcal{O}\left(d^{\,q}\,\e^{-2}\right).
$$
\item If
$$
\lim_{d\to\infty}\frac{\ln\,\max(\tK_d(\bzero),1)}{d}=0
$$
then we have weak tractability.
\end{itemize}

For radially symmetric kernels, the situation is even simpler since
$$
\sum_{j=1}^\infty\lambda_{d,j}=\kappa(0),
$$
and it does not depend on $d$. Hence,
$$
e^{\rm wor-all}(n,H_d)\le\left[\frac{\kappa(0)}{n+1}\right]^{1/2}\ \
\mbox{and}\ \
n^{\rm wor-abs-all}(n,H_d)\le \kappa(0)\,\e^{-2},
$$
and we have strong polynomial tractability with exponent at most $2$.

Extending Theorem~\ref{thm3} to arbitrary stationary or
isotropic kernels is not so straightforward.
To achieve smaller
strong tractability exponents than $2$
one needs to know the sum of the powers of eigenvalues,
and their dependence on $d$.  One would suspect,
as is the case for Gaussian kernels,
that some sort of anisotropy is needed to
obtain better strong tractability exponents than $2$.

\subsection{Only Function Values}\quad

We now turn to the class $\lstd$ and prove the following theorem.
\begin{theorem}\label{thm4}
Consider the function approximation problem
$\mathcal{I}=\{I_d\}_{d \in \naturals}$
for Hilbert spaces with isotropic
or anisotropic Gaussian kernels for the class $\lstd$ and
the absolute error criterion. Then
\begin{itemize}
\item $\mathcal{I}$
is strongly polynomially tractable
with exponent of strong polynomial tractability at most $4$.
For all $d\in\naturals$ and $\e\in(0,1)$ we have
\begin{eqnarray*}
e^{\rm wor-std}(n,H_d)&\le&\frac{\sqrt{2}}{n^{1/4}}
\,\left(1+\frac1{2\sqrt{n}}\right)^{1/2},\\
n^{\rm wor-abs-std}(\e,H_d)
&\le& \left\lceil\frac{(1+\sqrt{1+\e^2})^2}{\e^{4}}\right\rceil.
\end{eqnarray*}
\item
For the isotropic Gaussian kernel the exponent of
strong tractability is at least $2$.
Furthermore
strong polynomial tractability is equivalent to polynomial
tractability.
\end{itemize}
\end{theorem}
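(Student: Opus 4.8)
The plan is to prove the two bullets essentially independently, with the first one (the explicit worst case bound, the complexity bound, and strong polynomial tractability with exponent at most $4$) carrying all of the work. Here I would start from the standard device of converting an $\lall$ (eigenvalue) bound into an $\lstd$ bound at the cost of a Monte Carlo penalty: for every integer $m$ with $1\le m\le n$ there is a linear algorithm using $n$ function values --- a weighted least squares / Monte Carlo projection onto the span of the $m$ leading eigenfunctions $\varphi_{d,1},\dots,\varphi_{d,m}$, with sample points drawn from the importance density proportional to $\rho_d\sum_{j\le m}\varphi_{d,j}^2$ --- whose worst case error satisfies
\[
e^{\rm wor-std}(n,H_d)^2 \;\le\; \lambda_{d,m+1} \;+\; \frac{m}{n}\sum_{j=1}^{\infty}\lambda_{d,j},
\]
an estimate of exactly the kind proved in~\cite{KWW09} (the reference already used in the proof of Theorem~\ref{thm1}); the first term is the truncation error of the $\cl_2$-projection onto the span of $\varphi_{d,1},\dots,\varphi_{d,m}$ over the unit ball of $H_d$, and the second is the variance incurred in estimating the $m$ leading $\cl_2$-coefficients of $f$ from $n$ samples. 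Into this I would feed the two facts special to the Gaussian kernel, both independent of $d$ and of $\bgamma$: the trace identity $\sum_{j=1}^{\infty}\lambda_{d,j}=1$ from~\eqref{sumeigone}, and $\lambda_{d,m+1}\le(m+1)^{-1}$ from Lemma~\ref{lem1} with $\tau=1$. This gives $e^{\rm wor-std}(n,H_d)^2\le(m+1)^{-1}+m/n$ for all $1\le m\le n$; taking $m=\lceil\sqrt n\,\rceil$ and using $\lceil\sqrt n\,\rceil\ge\sqrt n$ in the first term and $\lceil\sqrt n\,\rceil<\sqrt n+1$ in the second yields
\[
e^{\rm wor-std}(n,H_d)^2 \;\le\; \frac{1}{\sqrt n}+\frac{\sqrt n+1}{n} \;=\; \frac{2}{\sqrt n}\Bigl(1+\frac{1}{2\sqrt n}\Bigr),
\]
which is the claimed bound on $e^{\rm wor-std}(n,H_d)$.

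To pass to the information complexity I would simply ask for which $n$ the last display is $\le\e^2$. Writing $x=n^{-1/2}$, the inequality $2x+x^2\le\e^2$ is $(x+1)^2\le 1+\e^2$, i.e.\ $x\le\sqrt{1+\e^2}-1$, i.e.\ $\sqrt n\ge(\sqrt{1+\e^2}-1)^{-1}=\e^{-2}\bigl(1+\sqrt{1+\e^2}\bigr)$; hence $n^{\rm wor-abs-std}(\e,H_d)\le\big\lceil(1+\sqrt{1+\e^2})^2\e^{-4}\big\rceil$. Since the right-hand side is $O(\e^{-4})$ as $\e\to 0$, uniformly in $d$, this is strong polynomial tractability with exponent at most $4$.

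For the isotropic statement I would use only the trivial comparison $e^{\rm wor-all}(n,H_d)\le e^{\rm wor-std}(n,H_d)$, hence $n^{\rm wor-abs-all}(\e,H_d)\le n^{\rm wor-abs-std}(\e,H_d)$. If for the isotropic kernel one had $n^{\rm wor-abs-std}(\e,H_d)\le C\,d^{\,q}\e^{-p}$ with $p<2$, the same bound would hold in the class $\lall$, contradicting Theorem~\ref{thm2}, which asserts that for the isotropic kernel the exponent of (strong) polynomial tractability for $\lall$ equals $2$ and that polynomial tractability there already forces strong polynomial tractability. Hence the exponent of strong polynomial tractability for $\lstd$ is at least $2$; and since the first part established strong polynomial tractability for $\lstd$ unconditionally, strong polynomial tractability and polynomial tractability are equivalent, the substantive content being that the $\e$-exponent of any polynomial-tractability bound remains at least $2$.

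Everything after the first displayed inequality is routine: substituting Lemma~\ref{lem1} and~\eqref{sumeigone}, the one-line optimization over $m$, the quadratic inversion in $\e$, and the comparison with $\lall$. The one step that needs care is the Monte Carlo inequality itself, namely (i) interchanging the supremum over the unit ball of $H_d$ with the expectation over the random sample points --- handled by the usual selection argument that extracts a single deterministic design meeting the averaged bound --- and (ii) controlling the variance of the coefficient estimates by exactly $(m/n)\sum_j\lambda_{d,j}$, for which the importance density proportional to $\rho_d\sum_{j\le m}\varphi_{d,j}^2$ is the right choice. If one prefers not to invoke~\cite{KWW09}, this short argument can be reproduced in place.
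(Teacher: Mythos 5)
Your proposal is correct and follows essentially the same route as the paper: the key inequality $e^{\rm wor\text{-}std}(n,H_d)^2\le\lambda_{d,m+1}+(m/n)\sum_j\lambda_{d,j}$ is exactly the bound \eqref{useitagain} the paper takes from Theorem~1 of \cite{WW01} (not \cite{KWW09}, a citation quibble only), and the choice $m=\lceil\sqrt n\,\rceil$, the quadratic inversion in $\e$, and the comparison with $\lall$ for the isotropic lower bound all match the paper's argument. Your observation that the equivalence of polynomial and strong polynomial tractability is immediate from the unconditional first bullet is a slight streamlining of the paper's phrasing but not a different proof.
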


\begin{proof}
We now use Theorem 1 from~\cite{WW01}. This theorem says that
\begin{equation}\label{useitagain}
e^{\rm wor-std}(n,H_d)\le
\min_{k=0,1,\dots}\left([e^{\rm
    wor-all}(k,H_d)]^2+ \frac{k}{n}\right)^{1/2}.
\end{equation}
Taking $k=\lceil n^{-1/2}\rceil$ and remembering that
$e^{\rm wor-all}(k,H_d)\le k^{-1/2}$ we obtain
$$
e^{\rm wor-std}(n,H_d)\le
\left(\frac1{\sqrt{n}}+\frac{1+\sqrt{n}}{n}\right)^{1/2}=
\frac{\sqrt{2}}{n^{1/4}}
\,\left(1+\frac1{2\sqrt{n}}\right)^{1/2},
$$
as claimed. Solving $e^{\rm wor-std}(n,H_d)\le\e$, we obtain
the bound on $n^{\rm wor-abs-std}(\e,H_d)$.

For the isotropic case, we know from Theorem~\ref{thm2}
that the exponent of strong
tractability for the class $\lall$ is $2$. For the class $\lstd$,
the exponent cannot be smaller.

Finally, assume that we have polynomial tractability for the class
$\lstd$. Then we also have polynomial tractability for the class $\lall$.
From Theorem~\ref{thm2} we know that then strong tractability for the class
$\lall$ holds. Furthermore we know that the exponent of strong
tractability is $2$ and $n^{\rm wor-abs-all}(\e,H_d)\le \e^{-2}$.
As above, we then get strong tractability also for $\lstd$ with the
exponent at most $4$.
This completes the proof.
\end{proof}

We do not know if the error bound of order $n^{-1/4}$ is sharp for the class
$\lstd$. We suspect that it is \emph{not} sharp and that
maybe even an error bound
of order $n^{-1/2}$ holds  for the class~$\lstd$ exactly as for
the class $\lall$.

For fast decaying shape parameters it is possible to improve
Theorem~\ref{thm4}. This is the subject of our next theorem.

\begin{theorem}\label{thm5}
Consider the function approximation problem $\mathcal{I}=\{I_d\}_{d \in \naturals}$
for Hilbert spaces with isotropic
or anisotropic Gaussian kernels for the class $\lstd$ and
the absolute error criterion.
Let $r(\bgamma)>1/2$. Then
\begin{itemize}
\item $\mathcal{I}$
is strongly polynomially tractable
with exponent at most
$$
p^{\rm std}=\frac1{r(\bgamma)}+\frac1{2\,r^2(\bgamma)}=p^{\rm
  all}+\tfrac12\left[p^{\rm all}\right]^2<4.
$$
\item
For all $d\in\naturals$, $\e\in(0,1)$ and $\delta\in(0,1)$ we have
\begin{eqnarray*}
e^{\rm wor-std}(n,H_d)&=&\mathcal{O}\left(n^{-1/p^{\rm std}+\delta}\right)=\mathcal{O}\left(n^{-r(\bgamma)/[1+1/(2r(\bgamma))]+\delta}\right),\\
n^{\rm wor-abs-std}(\e,H_d)
&=& \mathcal{O}\left(\e^{-(p^{\rm std}+\delta)}\right),
\end{eqnarray*}
where the factors in the big $\mathcal{O}$ notation do not depend on
$d$ and $\e^{-1}$ but may depend on $\delta$.
\end{itemize}
\end{theorem}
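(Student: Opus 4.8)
The plan is to run the argument from the proof of Theorem~\ref{thm1} for the class $\lstd$, but now feeding it an eigenvalue bound whose constant does not depend on $d$; this is available exactly because $r(\bgamma)>1/2$. First I would record, from \eqref{nplusfirst} in Lemma~\ref{lem1} (renaming $n+1$ as $n$), that for every $\tau\in(0,1]$
\begin{equation*}
\lambda_{d,n}\le \frac{C_3(\tau)}{n^{1/\tau}}\qquad\text{for all }n,d\in\naturals,
\qquad\text{where}\qquad
C_3(\tau):=\prod_{\ell=1}^\infty\frac{1-\omega_{\gamma_\ell}}{(1-\omega_{\gamma_\ell}^{\tau})^{1/\tau}},
\end{equation*}
the infinite product dominating every partial product because each of its factors is $\ge1$ when $\tau\le1$. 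Exactly as in the proof of Theorem~\ref{thm3}, for $\tau<1$ one has $C_3(\tau)<\infty$ if and only if $\sum_{\ell}\gamma_\ell^{2\tau}<\infty$, and by \eqref{wgammaform} the latter holds whenever $\tau>1/(2r(\bgamma))$. Since $r(\bgamma)>1/2$, the interval $\bigl(1/(2r(\bgamma)),1\bigr)$ is nonempty, and for each $\tau$ in it we obtain a bound $\lambda_{d,n}\le B\,n^{-p}$ with $p:=1/\tau>1$ and $B:=C_3(\tau)$ \emph{both independent of $d$}.

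Next I would invoke Theorem~5 of \cite{KWW09}, precisely as quoted in the proof of Theorem~\ref{thm1}: it produces, for every $\delta\in(0,1)$ and every $n$, a linear algorithm that uses at most $n$ function values and has worst case error at most $B\,C_{\delta,p}\,(n+1)^{-(1-\delta)p^2/(2p+2)}$, the constant $C_{\delta,p}$ depending only on $\delta$ and $p$. With $p=1/\tau$ the exponent equals $(1-\delta)/[2\tau(1+\tau)]$, and since $1/[2\tau(1+\tau)]$ decreases in $\tau$, letting $\delta\downarrow0$ and $\tau\downarrow 1/(2r(\bgamma))$ makes it increase to $r(\bgamma)/[1+1/(2r(\bgamma))]=2r^2(\bgamma)/(2r(\bgamma)+1)=1/p^{\rm std}$, where $p^{\rm std}=1/r(\bgamma)+1/(2r^2(\bgamma))=p^{\rm all}+\tfrac12[p^{\rm all}]^2$ since $p^{\rm all}=1/r(\bgamma)$ by Theorem~\ref{thm3}. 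Hence, given any $\delta'>0$, fixing $\tau$ close enough to $1/(2r(\bgamma))$ and $\delta$ small enough yields a $d$-independent constant $C'=C'(\delta')$ with $e^{\rm wor-std}(n,H_d)\le C'\,(n+1)^{-1/p^{\rm std}+\delta'}$ for all $n,d\in\naturals$. Solving $e^{\rm wor-std}(n,H_d)\le\e$ then gives $n^{\rm wor-abs-std}(\e,H_d)=\mathcal{O}\bigl(\e^{-(p^{\rm std}+\delta'')}\bigr)$ for every $\delta''>0$ with implied constant independent of $d$, which is strong polynomial tractability with exponent at most $p^{\rm std}$; and since $r\mapsto1/r+1/(2r^2)$ decreases on $(1/2,\infty)$ from the value $4$ at $r=1/2$, we get $p^{\rm std}<4$.

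The one point needing care is the order of limits: $C_3(\tau)\to\infty$ as $\tau\downarrow1/(2r(\bgamma))$, so one must first choose $\tau$ and $\delta$ in terms of the target $\delta'$ (so that $C'$ stays finite) and only afterwards let $n\to\infty$ — the same bookkeeping as in Theorem~\ref{thm3}. I do not expect a genuine obstacle here; the entire content is that Lemma~\ref{lem1} supplies a $d$-independent polynomial eigenvalue bound on exactly the range $\tau\in(1/(2r(\bgamma)),1)$, and that substituting $p=1/\tau$ into the \cite{KWW09} exponent $p^2/(2p+2)$ reproduces $1/p^{\rm std}$.
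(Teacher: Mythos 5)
Your proposal is correct and follows essentially the same route as the paper: obtain a dimension-independent polynomial decay $\lambda_{d,n}\le B\,n^{-p}$ with $p>1$ for the range $\tau\in(1/(2r(\bgamma)),1)$ (the paper reads this off from Theorem \ref{thm3}, you unpack it directly from Lemma \ref{lem1} via $C_3(\tau)<\infty$, which is the same computation underlying Theorem \ref{thm3}), and then feed it into Theorem 5 of \cite{KWW09} exactly as in the proof of Theorem \ref{thm1}. The exponent arithmetic $p^2/(2p+2)\to 2r^2(\bgamma)/(2r(\bgamma)+1)=1/p^{\rm std}$ and the remark about fixing $\tau,\delta$ before letting $n\to\infty$ are both right.
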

\begin{proof}
For $r(\bgamma)>1/2$, Theorem~\ref{thm3} for the class $\lall$
states that the exponent of strong polynomial tractability is
$p^{\rm all}=1/r(\bgamma)$. This means that for all $\eta\in(0,1)$
we have
$$
\lambda_{d,n}=\mathcal{O}(n^{-2r(\bgamma)+\eta}),
$$
with the factor in the big $\mathcal{O}$ notation independent of $n$
and $d$ but dependent on $\delta$. Since $2r(\bgamma)>1$,
it follows that for all positive $\eta$ small enough, $p=2r(\bgamma)-\eta>1$. Applying Theorem 5 from~\cite{KWW09} as in the proof of
Theorem~\ref{thm1}, it follows that for any $\delta_1\in(0,1)$ we have
\begin{eqnarray*}
e^{\rm wor-std}(n,H_d)&=&
\mathcal{O}\left(n^{-(1-\delta_1)p^2/(2p+2)}\right)=
\mathcal{O}\left(n^{-(1-\delta_1)(1+\mathcal{O}(\eta))
2w^2(\bgamma)/(2r(\bgamma)+1)}\right)\\
&=&
\mathcal{O}\left(n^{-1/(p^{\rm
      std}+\delta)}\right),
\end{eqnarray*}
again with the factor in the big $\mathcal{O}$ notation independent
of $n$ and $d$ but dependent on $\delta$. This leads to the estimates of
the theorem.
\end{proof}

Note that for large $r(\bgamma)$, the exponents of strong polynomial
tractability are nearly the same for both classes $\lall$ and $\lstd$.
For an exponentially decaying sequence of shape parameters, say, $\gamma_\ell=q^{\,\ell}$
for some $q\in(0,1)$, we have $p^{\rm all}=p^{\rm std}=0$, and
the rates of convergence are excellent and independent of $d$.

\section{Tractability for the Normalized Error Criterion} \label{normerrsec}

We now consider the function approximation problem for Hilbert spaces
$H_d(K_d)$ with a Gaussian kernel for the normalized error
criterion. That is, we want to find the smallest $n$ for which
$$
e^{\rm wor-\stdall}(n,H_d)\le \e\,\|I_d\|, \qquad \stdall \in \{{\rm std},{\rm all}\}.
$$
Note that $\|I_d\|=\sqrt{\lambda_{d,1}}\le1$ and it can be
exponentially small in $d$. Therefore
the normalized error criterion may be much harder
than the absolute error criterion and this is the reason
for a number of negative results for this error criterion.
It turns out that the isotropic and anisotropic cases are quite
different and we will study them in separate subsections. We begin with the case where the data are generated by arbitrary linear functionals. The class $\lstd$ is partially covered at the end.

\subsection{Isotropic Case with Arbitrary Linear Functionals}\quad

For the isotropic case, $\gamma_\ell=\gamma>0$, we have
$$
\|I_d\|=\tlambda_{\gamma,1}^{d/2} = (1 - \omega_{\gamma})^{d/2},
$$
and since $\tlambda_{\gamma,1}=1 - \omega_{\gamma}<1$, the norm of $I_d$ is exponentially
small. We are ready to present the following theorem.
\begin{theorem} \label{thm6}
Consider the function approximation problem $\mathcal{I}=\{I_d\}_{d \in \naturals}$
for Hilbert spaces with isotropic
Gaussian kernels for the class $\lall$ and for
the normalized error criterion. Then
\begin{itemize}
\item
$\mathcal{I}$
is not polynomially tractable,
\item
$\mathcal{I}$ is quasi-polynomially tractable
with exponent
$$
t^{\,\rm all}=t^{\,\rm all}(\gamma)=
\frac2{\ln\,
\frac{1+2\gamma^2+\sqrt{1+4\gamma^2}}{2\gamma^2}}.
$$
That is, for all $d\in\naturals$, $\e\in(0,1)$ and $\delta\in(0,1)$ we
have
\begin{eqnarray*}
e^{\rm wor-all}(n,H_d)&=&\mathcal{O}\left( \|I_d\|
\left(\frac1n\right)^{\frac1{(t^{\rm all}+\delta)\,(1+\ln\,d)}}
\,\left(\frac1{
\tfrac12(1+\sqrt{1+4\gamma^2})+\gamma^2}\right)^{d/4}\right),\\
n^{\rm wor-nor-all}(\e,H_d)&=&
\mathcal{O}\left(\exp\left((t^{\rm
all}+\delta)(1+\ln\,d)(1+\ln\,\e^{-1})\right)\right),
\end{eqnarray*}
where the factors in the big $\mathcal{O}$ notations are independent
of $n,\e^{-1}$ and $d$ but may depend on $\delta$.
\end{itemize}
\end{theorem}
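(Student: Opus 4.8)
The plan is to reduce the whole statement to an explicit description of the normalized spectrum of $W_d$ in the isotropic case and then carry out elementary binomial–coefficient estimates; Lemma~\ref{lem1} is not strictly needed, although it gives an alternative route. First I would record the spectrum. In the isotropic case $\gamma_\ell\equiv\gamma$, formula~\eqref{tlambarbd} gives $\tlambda_{d,\bgamma,\bj}=(1-\omega_\gamma)^d\,\omega_\gamma^{\,|\bj|-d}$ for $\bj\in\naturals^d$, where $|\bj|=j_1+\dots+j_d$; in particular $\lambda_{d,1}=(1-\omega_\gamma)^d=\|I_d\|^2$, and the ratios $\lambda_{d,j}/\lambda_{d,1}$ take exactly the values $\omega_\gamma^{k}$, $k=0,1,2,\dots$, each $\omega_\gamma^{k}$ occurring with multiplicity $\binom{k+d-1}{d-1}$ (the number of $\bj$ with $|\bj|-d=k$). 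Since $e^{\rm wor-all}(n,H_d)=\sqrt{\lambda_{d,n+1}}$, the normalized condition $e^{\rm wor-all}(n,H_d)\le\e\,\|I_d\|$ is $\lambda_{d,n+1}\le\e^{2}\lambda_{d,1}$, and the hockey-stick identity yields
\[
n^{\rm wor-nor-all}(\e,H_d)=\sum_{k:\,\omega_\gamma^{k}>\e^{2}}\binom{k+d-1}{d-1}=\binom{N_\e+d-1}{d},\qquad N_\e:=\#\{k\ge0:\omega_\gamma^{k}>\e^{2}\}.
\]
Because $\omega_\gamma^{k}>\e^{2}\iff k<2\ln\e^{-1}/\ln\omega_\gamma^{-1}=t^{\rm all}\ln\e^{-1}$ (note $\omega_\gamma^{-1}=(1+2\gamma^2+\sqrt{1+4\gamma^2})/(2\gamma^2)$, so $t^{\rm all}=2/\ln\omega_\gamma^{-1}$), one has $t^{\rm all}\ln\e^{-1}\le N_\e<t^{\rm all}\ln\e^{-1}+1$.

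For the upper bounds I would use $\binom{N+d-1}{d}=\binom{N+d-1}{N-1}=\prod_{i=1}^{N-1}\tfrac{d+i}{i}\le(1+d)^{N-1}$. This gives $\ln n^{\rm wor-nor-all}(\e,H_d)\le(N_\e-1)\ln(1+d)<t^{\rm all}\ln\e^{-1}\ln(1+d)\le t^{\rm all}(1+\ln d)(1+\ln\e^{-1})$, i.e.\ quasi-polynomial tractability with exponent $\le t^{\rm all}$ and constant $1$. For the pointwise bound, given $n\ge1$ I would take the largest integer $k$ with $(1+d)^{k-1}\le n$; then $n\ge\binom{k+d-1}{d}$, so $\lambda_{d,n+1}\le\lambda_{d,1}\,\omega_\gamma^{k}$, i.e.\ $e^{\rm wor-all}(n,H_d)\le\|I_d\|\,\omega_\gamma^{k/2}$, and since $k\ge\ln n/\ln(1+d)$ and $\omega_\gamma^{k/2}=\exp(-\tfrac12 k\ln\omega_\gamma^{-1})$ this gives $e^{\rm wor-all}(n,H_d)\le\|I_d\|\,n^{-1/(t^{\rm all}\ln(1+d))}$, which yields the asserted $\mathcal{O}$-estimate.

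Finally I would prove the two negative statements together. Fix a positive integer $N$ and set $\e_N:=\exp(-N/t^{\rm all})\in(0,1)$, so that $N_{\e_N}=N$ and $n^{\rm wor-nor-all}(\e_N,H_d)=\binom{N+d-1}{N-1}$ is a polynomial in $d$ of exact degree $N-1$, with $\ln\binom{N+d-1}{N-1}=(N-1)\ln d+\mathcal{O}_N(1)$ as $d\to\infty$. If $\mathcal{I}$ were polynomially tractable with $d$-exponent $q$, then choosing $N>q+1$ and letting $d\to\infty$ would contradict $\binom{N+d-1}{N-1}\le C\,d^{q}\,\e_N^{-p}=\mathcal{O}_N(d^{q})$; hence $\mathcal{I}$ is not polynomially tractable. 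If it were quasi-polynomially tractable with exponent $t<t^{\rm all}$, then, using $\ln(1+\e_N^{-1})\le N/t^{\rm all}+1$, the bound $\binom{N+d-1}{N-1}\le C\exp(t\ln(1+d)\ln(1+\e_N^{-1}))$ would force, on comparing the coefficients of $\ln d$ as $d\to\infty$, the inequality $N-1\le t(N/t^{\rm all}+1)$; since $1-t/t^{\rm all}>0$ this fails for all large $N$ — a contradiction. Together with the previous paragraph this shows the exponent of quasi-polynomial tractability equals $t^{\rm all}$.

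The argument is elementary once the spectrum is written down; the one point needing care is that the two ends of the exponent computation must actually meet — the crude bound $\binom{N+d-1}{d}\le(1+d)^{N-1}$ used for the upper estimate has to be matched by the exact polynomial degree $N-1$ in the lower estimate (with the choice $\e_N=\exp(-N/t^{\rm all})$ making $N_{\e_N}$ an integer) — and one must keep track of the strict versus non-strict inequalities defining $N_\e$. As an alternative to the explicit count, one could instead combine the spectral estimate~\eqref{nplusfirst} of Lemma~\ref{lem1} with the isotropic product identity~\eqref{sumpoweig} for a parameter $\tau$ of order $(1+\ln d)/\ln\omega_\gamma^{-1}$, which reproduces the same bounds.
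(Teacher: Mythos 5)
Your proof is correct, but it takes a genuinely different route from the paper. The paper disposes of both bullet points by citation: the lack of polynomial tractability follows from Theorem~5.6 of~\cite{NW08} (valid for any tensor product problem whose univariate operator has at least two positive eigenvalues), and quasi-polynomial tractability with the stated exponent follows from Theorem~3.3 of~\cite{GW10}, which for tensor product problems gives the exponent $\max\bigl(2/r,\,2/\ln(\tlambda_{\gamma,1}/\tlambda_{\gamma,2})\bigr)$; here the univariate eigenvalues decay geometrically, so $r=\infty$ and the exponent reduces to $2/(-\ln\omega_\gamma)=t^{\rm all}$. You instead exploit the fact that in the isotropic case the normalized spectrum is exactly $\{\omega_\gamma^{k}\}_{k\ge0}$ with multiplicities $\binom{k+d-1}{d-1}$, reduce everything to the identity $n^{\rm wor-nor-all}(\e,H_d)=\binom{N_\e+d-1}{d}$, and run elementary binomial estimates in both directions. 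Your counting, the hockey-stick step, the bound $\binom{N+d-1}{d}\le(1+d)^{N-1}$, and the lower-bound argument via $\e_N=\exp(-N/t^{\rm all})$ (which makes $n^{\rm wor-nor-all}(\e_N,H_d)$ a polynomial in $d$ of exact degree $N-1$, killing both polynomial tractability and any QPT exponent $t<t^{\rm all}$) are all sound. What your approach buys is a self-contained, fully explicit proof with concrete constants (e.g., $C=1$ in the QPT bound and no $\delta$-loss in the exponent of $n$); what the paper's approach buys is brevity and the fact that the cited criteria immediately generalize beyond the Gaussian kernel.

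One remark on the first displayed estimate in the theorem: since $\bigl(\tfrac12(1+\sqrt{1+4\gamma^2})+\gamma^2\bigr)^{-d/4}=\tlambda_{\gamma,1}^{d/2}=\|I_d\|$, the bound as printed contains the factor $\|I_d\|$ twice, which cannot be correct (already for $n=1$ it would force $\lambda_{d,2}=\mathcal{O}(\lambda_{d,1}^2)$ uniformly in $d$). Your bound $e^{\rm wor-all}(n,H_d)\le\|I_d\|\,n^{-1/(t^{\rm all}\ln(1+d))}$ is exactly what inverting the quasi-polynomial tractability bound yields and is the estimate the theorem should be asserting, so this is a defect of the statement rather than of your argument.
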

\begin{proof}
The lack of polynomial tractability
follows, in particular,
from Theorem~5.6 of~\cite{NW08}. In fact, the lack of polynomial
tractability for the class $\lall$
holds for all tensor product problems with two positive
eigenvalues for the univariate case.

For quasi-polynomial tractability we use Theorem~3.3 of~\cite{GW10},
which states that quasi-polynomial tractability for the class $\lall$
holds for tensor product problems iff
the rate
$$
r=\sup\left\{\,\beta\ge0\,|\ \
  \lim_{n\to\infty}\tlambda_{\gamma,n}\,n^{\beta}=0\right\}
$$
of the univariate eigenvalues is positive and
the second largest univariate eigenvalue $\tlambda_{\gamma,2}$ is
strictly less than the largest univariate eigenvalue $\tlambda_{\gamma,1}$.
If so, then
the exponent of quasi-polynomial tractability is
$$
t^{\,\rm all}=\max\left(\frac2{r},\frac2{\ln\
    \tlambda_{\gamma,1}/\tlambda_{\gamma,2}}\right).
$$
In our case, $r=\infty$ and
$$
t^{\rm all}=\frac{2}{\ln \, \tlambda_{\gamma,1}/\tlambda_{\gamma,2}}
=\frac{2}{- \ln \, \omega_{\gamma} }
=\frac{2}{\ln \, \frac{1+2\gamma^2+\sqrt{1+4\gamma^2}}{2\gamma^2}}.
$$
The estimates of
$e^{\rm wor-all}(n,H_d)$ and $n^{\rm wor-nor-all}(\e,H_d)$
follow from the definition of quasi-polynomial tractability.
This completes the proof.
\end{proof}

For the isotropic case
we lose polynomial
tractability for the normalized error criterion
although even strong polynomial tractability
is present for the absolute error criterion.
This shows qualitatively that the normalized error criterion is much
harder. In this case we only have quasi-polynomial tractability.
Observe that the exponent of quasi-polynomial tractability depends on
$\gamma$ and we have
$$
\lim_{\gamma\to0}\,t^{\rm all}(\gamma)=0\ \ \ \mbox{and}
\lim_{\gamma\to\infty}\,t^{\rm all}(\gamma)=\infty.
$$
For some specific values of $\gamma$ we have
\begin{eqnarray*}
t^{\rm all}(2^{-1/2})&=&1.5186\dots,\\
t^{\rm all}(1)&=&2.0780\dots,\\
t^{\rm all}(2^{1/2})&=&2.8853\dots.
\end{eqnarray*}

\subsection{Anisotropic Case with Arbitrary Linear Functionals}\quad

We now consider the sequence $\{\gamma_\ell\}_{\ell
\in \naturals}$ of shape parameters and
ask when we can guarantee strong polynomial
tractability. As we shall
see, this holds for the class $\lall$
if $r(\bgamma)>0$ although the exponent of strong polynomial
tractability
is large for small $r(\bgamma)$. More precisely, we have the following
theorem, which is similar to Theorem \ref{thm3}.
\begin{theorem}\label{thm7}
Consider the function approximation problem $\mathcal{I}=\{I_d\}_{d \in \naturals}$
for Hilbert spaces with anisotropic
Gaussian kernels for the class $\lall$ and for
the normalized error criterion. Then
\begin{itemize}
\item
$\mathcal{I}$
is strongly polynomially tractable if $r(\bgamma)>0$.
If so, then the exponent is
$$
p^{\rm all}=\frac1{r(\bgamma)}.
$$
\item
Let $r(\bgamma)>0$.
Then for all $d\in\naturals$, $\e\in(0,1)$ and $\delta\in(0,1)$ we
have
\begin{eqnarray*}
e^{\rm wor-all}(n,H_d)&=&\mathcal{O}\left( \|I_d\|
n^{-1/p^{\rm all}+\delta}\right)=\mathcal{O}\left(
n^{-r(\bgamma)+\delta}\right),\\
n^{\rm wor-nor-all}(\e,H_d)&=&
\mathcal{O}\left(\e^{-(p^{\rm all}+\delta)}\right),
\end{eqnarray*}
where the factors in the big $\mathcal{O}$ notations are independent
of $n,\e^{-1}$ and $d$ but may depend on $\delta$.
\item
Furthermore, in the case of ordered shape parameters,
i.e., $\gamma_1 \ge \gamma_2 \ge \cdots$ if
$$
n^{\rm wor-nor-all}(\e,H_d)=\mathcal{O}\left(\e^{-p}\,d^{\,q}\right)
\ \ \ \mbox{for all}\ \ \e\in(0,1)\ \mbox{and}\ d\in\naturals,
$$
then $p\ge p^{\rm all}= \frac1{r(\bgamma)}$,
which means that strong polynomial tractability is
equivalent to polynomial tractability.
\end{itemize}
\end{theorem}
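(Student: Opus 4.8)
The plan is to run the argument in the proof of Theorem~\ref{thm3}, but with the eigenvalues $\lambda_{d,j}$ replaced everywhere by the \emph{normalized} eigenvalues $\lambda_{d,j}/\lambda_{d,1}$, since the normalized error criterion requires exactly $e^{\rm wor-all}(n,H_d)=\sqrt{\lambda_{d,n+1}}\le\e\sqrt{\lambda_{d,1}}=\e\,\|I_d\|$, i.e.\ $\lambda_{d,n+1}/\lambda_{d,1}\le\e^2$. By \eqref{tlambarbd} the normalized eigenvalue indexed by $\bj\in\naturals^d$ is $\tlambda_{d,\bgamma,\bj}/\lambda_{d,1}=\prod_{\ell=1}^d\omega_{\gamma_\ell}^{\,j_\ell-1}\le1$, and hence, using the product structure,
\[
\sum_{\bj\in\naturals^d}\left(\frac{\tlambda_{d,\bgamma,\bj}}{\lambda_{d,1}}\right)^\tau=\prod_{\ell=1}^d\sum_{j=1}^\infty\omega_{\gamma_\ell}^{\,\tau(j-1)}=\prod_{\ell=1}^d\frac1{1-\omega_{\gamma_\ell}^\tau}\qquad\text{for every }\tau>0.
\]
Notice that the factor $1-\omega_{\gamma_\ell}$ appearing in the numerator of $\tlambda_{\gamma_\ell,j_\ell}$ has cancelled against the corresponding factor of $\lambda_{d,1}=\prod_{\ell=1}^d(1-\omega_{\gamma_\ell})$; this cancellation is exactly what makes the normalized case tractable when $r(\bgamma)>0$.

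Next I would invoke the version of Theorem~5.1 of~\cite{NW08} for the \emph{normalized} error criterion: $\mathcal{I}$ is strongly polynomially tractable for $\lall$ iff there exist positive $C_1,\tau$ with $\sup_{d\in\naturals}\big(\sum_{j\ge\lceil C_1\rceil}(\lambda_{d,j}/\lambda_{d,1})^\tau\big)^{1/\tau}<\infty$, and then $p^{\rm all}$ is the infimum of those $2\tau$. Since the normalized eigenvalues lie in $[0,1]$, deleting the finitely many terms with $j<\lceil C_1\rceil$ changes the sum by at most $\lceil C_1\rceil-1$, so this condition is equivalent to $\prod_{\ell=1}^\infty(1-\omega_{\gamma_\ell}^\tau)^{-1}<\infty$, i.e.\ to $\sum_{\ell=1}^\infty\omega_{\gamma_\ell}^\tau<\infty$. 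From \eqref{omegadef}, $\omega_\gamma=\gamma^2+\mathcal{O}(\gamma^4)$ as $\gamma\to0$, while $\omega_\gamma$ stays bounded away from $0$ for $\gamma$ bounded away from $0$; hence this series converges for some $\tau>0$ iff $\sum_{\ell=1}^\infty\gamma_\ell^{2\tau}<\infty$ for some $\tau>0$, which by \eqref{wgammaform} holds iff $r(\bgamma)>0$, with $\inf\{2\tau:\sum_\ell\gamma_\ell^{2\tau}<\infty\}=1/r(\bgamma)$. (If $\gamma_\ell\not\to0$ then $r(\bgamma)=0$, every such series diverges, and strong polynomial tractability fails, consistent with the claim.) This establishes the first bullet, $p^{\rm all}=1/r(\bgamma)$.

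For the second bullet I would obtain explicit bounds from Lemma~\ref{lem1}: dividing \eqref{nplusfirst} by $\lambda_{d,1}=\prod_{\ell=1}^d(1-\omega_{\gamma_\ell})$ yields, for every $\tau>0$,
\[
\frac{\lambda_{d,n+1}}{\lambda_{d,1}}\le\frac1{(n+1)^{1/\tau}}\prod_{\ell=1}^d\frac1{(1-\omega_{\gamma_\ell}^\tau)^{1/\tau}}\le\frac{C_\tau}{(n+1)^{1/\tau}},\qquad C_\tau:=\prod_{\ell=1}^\infty(1-\omega_{\gamma_\ell}^\tau)^{-1/\tau},
\]
where $C_\tau<\infty$ and is independent of $d$ whenever $\tau>1/(2r(\bgamma))$. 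Taking square roots, $e^{\rm wor-all}(n,H_d)\le\|I_d\|\,\sqrt{C_\tau}\,(n+1)^{-1/(2\tau)}$; given $\delta\in(0,1)$, choosing $\tau$ just above $1/(2r(\bgamma))$ so that $1/(2\tau)>r(\bgamma)-\delta$ gives $e^{\rm wor-all}(n,H_d)=\mathcal{O}(\|I_d\|\,n^{-r(\bgamma)+\delta})$, and solving $\|I_d\|\sqrt{C_\tau}(n+1)^{-1/(2\tau)}\le\e\|I_d\|$ for $n$ gives $n^{\rm wor-nor-all}(\e,H_d)=\mathcal{O}(\e^{-(p^{\rm all}+\delta)})$.

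The third bullet would follow the end of the proof of Theorem~\ref{thm3}. Assuming $n^{\rm wor-nor-all}(\e,H_d)=\mathcal{O}(\e^{-p}d^{\,q})$ one gets $\lambda_{d,n+1}/\lambda_{d,1}=\mathcal{O}(d^{2q/p}(n+1)^{-2/p})$, hence for every $\tau>p/2$
\[
\prod_{\ell=1}^d\frac1{1-\omega_{\gamma_\ell}^\tau}=\sum_{j=1}^\infty\left(\frac{\lambda_{d,j}}{\lambda_{d,1}}\right)^\tau=\mathcal{O}\big(d^{\,2q\tau/p}\big).
\]
Taking logarithms and using $-\ln(1-x)\ge x$ gives $\sum_{\ell=1}^d\omega_{\gamma_\ell}^\tau=\mathcal{O}(\ln d)$, which forces $\gamma_\ell\to0$ (otherwise the left side grows like $d$) and so $\sum_{\ell=1}^d\gamma_\ell^{2\tau}=\mathcal{O}(\ln d)$; since the $\gamma_\ell$ are ordered, $d\,\gamma_d^{2\tau}\le\sum_{\ell=1}^d\gamma_\ell^{2\tau}=\mathcal{O}(\ln d)$, hence $\gamma_d=\mathcal{O}((\ln d/d)^{1/(2\tau)})$ and $r(\bgamma)\ge1/(2\tau)$; letting $\tau\downarrow p/2$ gives $r(\bgamma)\ge1/p$, i.e.\ $p\ge1/r(\bgamma)=p^{\rm all}$. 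The only real obstacle is locating and correctly applying the normalized-error form of the~\cite{NW08} criterion and keeping track of the degenerate regime $\gamma_\ell\not\to0$ (where $r(\bgamma)=0$ and strong polynomial tractability fails); once the factors $1-\omega_{\gamma_\ell}$ cancel in the normalization, leaving the clean product $\prod_\ell(1-\omega_{\gamma_\ell}^\tau)^{-1}$, everything reduces line for line to the Gaussian computations already carried out for Theorem~\ref{thm3}.
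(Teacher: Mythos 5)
Your proposal is correct and follows essentially the same route as the paper's proof: the normalized-error eigenvalue criterion from~\cite{NW08} (the paper cites it as Theorem~5.2 there), the cancellation of the $1-\omega_{\gamma_\ell}$ factors leaving $\prod_{\ell}(1-\omega_{\gamma_\ell}^\tau)^{-1}$, the equivalence with $\sum_\ell\gamma_\ell^{2\tau}<\infty$ and hence $p^{\rm all}=1/r(\bgamma)$, and the ordered-parameter lower bound argument carried over from Theorem~\ref{thm3}. Your extra step of dividing \eqref{nplusfirst} by $\lambda_{d,1}$ to get the explicit error bound is a harmless elaboration of what the paper dispatches with ``follows from the definition of strong tractability.''
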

\begin{proof}
Theorem 5.2 of~\cite{NW08} states that strong polynomial tractability
holds iff there exits a positive number $\tau$ such that
$$
\tC_2:=\sup_{d}
\sum_{j=1}^\infty
\left(\frac{\lambda_{d,j}}{\lambda_{d,1}}\right)^\tau
=\prod_{\ell=1}^\infty\frac1{1-\omega_{\gamma_\ell}^\tau}<\infty.
$$
If so, then $n^{\rm wor-nor-all}(\e,H_d)\le \tC_2\,\e^{-2\tau}$
 for all $\e\in(0,1)$ and $d\in\naturals$,
and the exponent of strong polynomial tractability is the infimum of $2\tau$
for which $\tC_2<\infty$.

Clearly, $\tC_2<\infty$ iff
$$
\sum_{\ell=1}^\infty\omega_{\gamma_\ell}^\tau<\infty
\ \  \ \mbox{iff}\ \ \
\sum_{\ell=1}^\infty\gamma_\ell^{2\tau}<\infty.
$$
This holds iff $r(\bgamma)\ge 1/(2\tau)>0$. This also proves that
$p^{\rm all}=1/r(\bgamma)$.
The estimates on $e^{\rm wor-all}(n,H_d)$
and $n^{\rm wor-nor-all}(\e,H_d)$ follow from the definition of strong
tractability.

The case of polynomial tractability for ordered shape parameters
follows analogously to the proof in Theorem \ref{thm3}.
From Theorem 5.2 of \cite{NW08}, we know that
the problem is polynomially tractable with
$n^{\rm
  wor-nor-all}(\e,H_d)=\mathcal{O}\left(\e^{-2\tau}\,d^{\,q_2\tau}
\right)$ iff
$$
\tC_2:=\sup_{d\in\naturals}\,d^{-q_2}
\bigg[\sum_{j=1}^\infty
\left(\frac{\lambda_{d,j}}{\lambda_{d,1}}\right)^\tau \bigg]^{1/\tau}
=d^{-q_2}\prod_{\ell=1}^d\frac1{(1-\omega_{\ell}^\tau)^{1/\tau}}
<\infty.
$$
Proceeding as in the proof of Theorem 3, this can happen
for ordered shape parameters only if $\tau\ge 1/(2r(\bgamma))$.
Therefore, $p\ge p^{\rm all}= 1/r(\bgamma)$, as claimed.
\end{proof}

The essence of Theorem \ref{thm7} is that
under the normalized error criterion strong polynomial
and polynomial tractability for the class $\lall$
requires that the shape parameters tend to zero
polynomially fast so that $r(\bgamma)>0$. This condition
is stronger than what is required for the absolute error criterion.

It is interesting to compare strong polynomial tractability for the
absolute and normalized error criteria for the class $\lall$,
see Theorems~\ref{thm3}
and~\ref{thm7}. This is the subject of the next corollary.
\begin{corollary}\label{cor1}
Consider the function approximation problem $\mathcal{I}=\{I_d\}_{d \in \naturals}$
for Hilbert spaces with isotropic or anisotropic
Gaussian kernels for the class $\lall$. Let $r(\bgamma)$ be the rate of
convergence of shape parameters.
\begin{itemize}
\item Absolute error criterion:

$\mathcal{I}$
is always strongly polynomially tractable
with exponent
$$
p^{\rm all}=\min\left(2,\frac1{r(\bgamma)}\right)\le2.
$$
\item Normalized error criterion:

$\mathcal{I}$
is strongly polynomially tractable iff $r(\bgamma)>0$. If so,
the exponent is
$$
p^{\rm all}=\frac1{r(\bgamma)}.
$$
\end{itemize}
The strong tractability exponents under the two error
criteria are the same provided that $r(\bgamma)\ge 1/2$.
\end{corollary}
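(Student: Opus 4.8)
The plan is to read the two bullet points off directly from Theorems~\ref{thm3} and~\ref{thm7}, and then to settle the comparison of exponents by an elementary observation about the map $r\mapsto\min(2,1/r)$. No genuinely new argument is needed, since both theorems have already established matching upper and lower bounds on the respective strong tractability exponents; the corollary is purely a juxtaposition.

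First, for the absolute error criterion I would simply invoke the first bullet of Theorem~\ref{thm3}: for arbitrary positive shape parameters $\bgamma$, the problem $\mathcal{I}$ for the class $\lall$ is strongly polynomially tractable with exponent $p^{\rm all}=\min(2,1/r(\bgamma))\le 2$. That theorem already handles both the upper bound (take $\tau=1$, or $\tau$ slightly larger than $1/(2r(\bgamma))$ when $r(\bgamma)>1/2$) and the matching lower bound. Likewise, for the normalized error criterion I would invoke the first bullet of Theorem~\ref{thm7}: strong polynomial tractability for the class $\lall$ holds if and only if $r(\bgamma)>0$, in which case the exponent is exactly $1/r(\bgamma)$. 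It is worth noting in passing that the quantity $r(\bgamma)$ appearing in the two theorems is literally the same object, being fixed once and for all by~\eqref{wgammaform}, so the two exponent formulas may be compared directly.

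Finally, to prove the comparison statement I would reason as follows. If $r(\bgamma)\ge 1/2$, then $1/r(\bgamma)\le 2$, so $\min(2,1/r(\bgamma))=1/r(\bgamma)$; hence the absolute exponent from Theorem~\ref{thm3} and the normalized exponent from Theorem~\ref{thm7} coincide, both equal to $1/r(\bgamma)$. When $0<r(\bgamma)<1/2$ the absolute exponent is $2$ while the normalized exponent is $1/r(\bgamma)>2$, and when $r(\bgamma)=0$ there is strong polynomial tractability in the absolute case (exponent $2$) but no strong polynomial tractability at all in the normalized case; thus the two exponents agree precisely on the range $r(\bgamma)\ge 1/2$, as claimed. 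There is no real obstacle here beyond the bookkeeping just mentioned; the content lies entirely in Theorems~\ref{thm3} and~\ref{thm7}.
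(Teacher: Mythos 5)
Your proposal is correct and takes essentially the same route as the paper, which states Corollary \ref{cor1} as a direct juxtaposition of Theorems \ref{thm3} and \ref{thm7} with no separate argument. The only content beyond citation is the elementary observation that $r(\bgamma)\ge 1/2$ implies $\min(2,1/r(\bgamma))=1/r(\bgamma)$, which you carry out correctly (including the cases $0<r(\bgamma)<1/2$ and $r(\bgamma)=0$ where the exponents differ or normalized strong tractability fails).
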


\subsection{Only Function Values}\quad

We now turn to the class $\lstd$. We do not know
if quasi-polynomial tractability holds for the class
$\lstd$ in the isotropic case. The theorems that we used
for the absolute error criterion
are not enough for the normalized error criterion. Indeed,
no matter how a positive
$k$ is defined in~\eqref{useitagain}
we must take $n$ exponentially large in $d$ if we want
to guarantee that the error is less than $\e\|I_d\|$. Similarly,
if we use~\eqref{assumption1} then we must guarantee that $p>1$ and
this makes the number $B$ exponentially large in $d$.
We leave as an open problem whether
quasi-polynomial tractability holds for the class $\lstd$.

We now discuss the initial error for
$\lim_{\ell\to\infty}\gamma_\ell=0$.
We have
$$
\|I_d\|=\prod_{\ell=1}^d\left(1-\omega_{\gamma_\ell}\right)^{1/2}
=\exp\left(\mathcal{O}(1)-\tfrac12\sum_{\ell=1}^d\gamma_\ell^2\right).
$$
For $r(\bgamma)\in[0,1/2)$, the initial error still goes exponentially
fast to zero, whereas for $r(\bgamma)=1/2$ it may go to zero or
be uniformly bounded from below by a positive number,
and finally for $r(\bgamma)>1/2$ it is always uniformly
bounded from below by a positive number. For example, take
$\gamma_\ell=\ell^{-\alpha}\ln^\beta(1+\ell)$ for a positive $\alpha$ and
real $\beta$. Then $r(\bgamma)=\alpha$. For $\alpha=\tfrac12$, the
initial error goes to zero for $\beta> -\tfrac12$, and is
of order~$1$ if $\beta\le-\tfrac12$.

This discussion shows that for $r(\bgamma)>1/2$ there is really no
difference between the absolute and normalized error criteria. This
means that for $r(\bgamma)>1/2$ we can apply Theorem~\ref{thm5} for the
class $\lstd$ with $\e$ replaced by $\e\|I_d\|=\Theta(\e)$.
For $r(\bgamma)=1/2$, Theorem~\ref{thm4} can be applied
if we assume additionally that
$\sum_{\ell=1}^\infty\gamma_\ell^2<\infty$. The last assumption
implies that $\|I_d\|=\Theta(1)$. We summarize this
discussion in the following corollary.

\begin{corollary} \label{cor2}
Consider the function approximation problem $\mathcal{I}=\{I_d\}_{d \in \naturals}$
for Hilbert spaces with anisotropic
Gaussian kernels for the class $\lstd$ and for
the normalized error criterion. Assume that
$$
r(\bgamma)>\tfrac12\ \ \mbox{or}\ \ \
\left(\ r(\bgamma)=\tfrac12\ \mbox{and}\
\sum_{\ell=1}^\infty\gamma_\ell^2<\infty\ \right).
$$
Then
\begin{itemize}
\item
$\mathcal{I}$
is strongly polynomially tractable with exponent at most
$$
p^{\rm std}=\frac1{r(\bgamma)}+\frac1{2\,r^2(\bgamma)}=p^{\rm all}+\tfrac12
\,\left[p^{\rm all}\right]^2\le 4.
$$
\item
For all $d\in\naturals$, $\e\in(0,1)$ and $\delta\in(0,1)$ we
have
\begin{eqnarray*}
e^{\rm wor-all}(n,H_d)&=&\mathcal{O}\left(
n^{-1/(p^{\rm all}+\delta)}\right),\\
n^{\rm wor-nor-all}(\e,H_d)&=&
\mathcal{O}\left(\e^{-(p^{\rm all}+\delta)}\right),
\end{eqnarray*}
where the factors in the big $\mathcal{O}$ notations are independent
of $n,\e^{-1}$ and $d$ but may depend on $\delta$.
\end{itemize}
\end{corollary}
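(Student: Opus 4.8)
The plan is to reduce the normalized error criterion for the class $\lstd$ to the absolute error criterion already treated in Theorems \ref{thm4} and \ref{thm5}. The crux is that, under the stated hypotheses, the initial error $\|I_d\|=\sqrt{\lambda_{d,1}}$ is bounded above and below by positive constants independent of $d$, so the two error criteria differ only by a $d$-independent rescaling of $\e$.

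First I would pin down the initial error. Since the eigenvalues are of product form, $\lambda_{d,1}=\prod_{\ell=1}^d\tlambda_{\gamma_\ell,1}=\prod_{\ell=1}^d(1-\omega_{\gamma_\ell})$, so that
\[
\|I_d\|=\prod_{\ell=1}^d(1-\omega_{\gamma_\ell})^{1/2}
=\exp\!\left(\mathcal{O}(1)-\tfrac12\sum_{\ell=1}^d\gamma_\ell^2\right),
\]
using $\omega_{\gamma_\ell}=\gamma_\ell^2+\mathcal{O}(\gamma_\ell^4)$ from \eqref{omegadef} together with $\ln(1-x)=-x+\mathcal{O}(x^2)$ and $\sum_\ell\gamma_\ell^4<\infty$ (a consequence of $\sum_\ell\gamma_\ell^2<\infty$). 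Hence $\|I_d\|=\Theta(1)$ uniformly in $d$, i.e., $c\le\|I_d\|\le1$ for some $c>0$ independent of $d$, as soon as $\sum_{\ell=1}^\infty\gamma_\ell^2<\infty$.

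Next I would check that the hypothesis of the corollary supplies exactly this summability. If $r(\bgamma)>1/2$, then since the set $\{\beta>0:\sum_\ell\gamma_\ell^{1/\beta}<\infty\}$ appearing in \eqref{wgammaform} is a down-set with supremum $r(\bgamma)$ (the $\gamma_\ell$ being eventually less than $1$), the value $\beta=1/2$ lies in it, so $\sum_\ell\gamma_\ell^2<\infty$; if $r(\bgamma)=1/2$ this summability is assumed directly. Either way $\|I_d\|=\Theta(1)$, so the normalized stopping condition $e^{\rm wor-std}(n,H_d)\le\e\|I_d\|$ is, up to a $d$-independent change of $\e$, the absolute condition $e^{\rm wor-std}(n,H_d)\le\e'$ with $\e'=\e\|I_d\|\in[c\e,\e]$.

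Finally I would transfer the absolute-error results. When $r(\bgamma)>1/2$, Theorem \ref{thm5} applies and yields $e^{\rm wor-std}(n,H_d)=\mathcal{O}(n^{-1/p^{\rm std}+\delta})$ together with $n^{\rm wor-abs-std}(\e',H_d)=\mathcal{O}((\e')^{-(p^{\rm std}+\delta)})$, where $p^{\rm std}=1/r(\bgamma)+1/(2r^2(\bgamma))$; substituting $\e'=\Theta(\e)$ converts the latter into $n^{\rm wor-nor-std}(\e,H_d)=\mathcal{O}(\e^{-(p^{\rm std}+\delta)})$, giving strong polynomial tractability with exponent at most $p^{\rm std}$. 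When $r(\bgamma)=1/2$ one has $p^{\rm std}=2+2=4$; Theorem \ref{thm5} no longer applies, so I would instead invoke Theorem \ref{thm4}, whose exponent bound $4$ matches $p^{\rm std}$ in this boundary case, again composed with the substitution $\e\mapsto\e\|I_d\|$. I expect the only mildly delicate step to be the first one --- showing $\|I_d\|$ is uniformly bounded away from zero --- and this is precisely the place where one must require $r(\bgamma)>1/2$ rather than merely $r(\bgamma)>0$, which had sufficed for the class $\lall$ in Theorem \ref{thm7}; the rest is bookkeeping.
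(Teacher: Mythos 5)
Your proposal is correct and follows essentially the same route as the paper: both establish $\|I_d\|=\prod_{\ell=1}^d(1-\omega_{\gamma_\ell})^{1/2}=\exp\bigl(\mathcal{O}(1)-\tfrac12\sum_{\ell=1}^d\gamma_\ell^2\bigr)=\Theta(1)$ under the stated hypotheses, and then transfer the absolute-error results of Theorems \ref{thm4} and \ref{thm5} by replacing $\e$ with $\e\|I_d\|=\Theta(\e)$. Your additional care in verifying that $r(\bgamma)>\tfrac12$ forces $\sum_\ell\gamma_\ell^2<\infty$ (via the down-set structure of the set in \eqref{wgammaform}) is a detail the paper leaves implicit, but it matches the paper's claim exactly.
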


The case $r(\bgamma)<1/2$
is open.  We do not know if polynomial tractability holds for the
class $\lstd$ in this case.

\end{document}